\newtheorem{definition}{Definition}[section]
\newtheorem{proposition}[definition]{Proposition}
\newtheorem{theorem}[definition]{Theorem}
\newtheorem{lemma}[definition]{Lemma}
\newtheorem{corollary}[definition]{Corollary}
\theoremstyle{definition}
\newtheorem{remark}{Remark}[section]
\newcommand{\R}{\mathbb{R}}
\newcommand{\N}{\mathbb{N}}
\newcommand{\ep}{\varepsilon}
\newcommand{\pa}{\partial}
\title[Damped wave equation with space-dependent damping]{
Diffusion phenomena for the wave equation with space-dependent damping
in an exterior domain
}
\author[M. Sobajima]{Motohiro Sobajima}
\email[M. Sobajima]{msobajima1984@gmail.com}
\address[M. Sobajima]{Department of Mathematics, Tokyo University of Science, 
1-3 Kagurazaka, Shinjuku-ku, 162-8601, Tokyo, Japan}
\author[Y. Wakasugi]{Yuta WAKASUGI}
\email[Y. Wakasugi]{yuta.wakasugi@math.nagoya-u.ac.jp}
\address[Y. Wakasugi]{Graduate School of Mathematics, Nagoya University,
Furocho, Chikusaku, Nagoya 464-8602 Japan}
\begin{document}
\begin{abstract}
In this paper, we consider the asymptotic behavior of solutions to the
wave equation with space-dependent damping in an exterior domain.
We prove that when the damping is effective,
the solution is approximated by that of the corresponding heat equation
as time tends to infinity.
Our proof is based on semigroup estimates for the corresponding heat equation
and weighted energy estimates for the damped wave equation. 
The optimality of the decay late for solutions is also established. 
\end{abstract}
\keywords{Damped wave equation; diffusion phenomena; 
Friedrichs extensions, semigroup estimates; weighted energy estimates}

\maketitle
\section{Introduction}
\footnote[0]{2010 Mathematics Subject Classification. 35L20;35C06;47B25;35B40}
Let
$\Omega \subset \R^N$ $(N\geq 2)$
be an exterior domain with smooth boundary.
We consider the initial-boundary value problem to the wave equation with
space-dependent damping
\begin{align}
\label{dw}
	\left\{\begin{array}{ll}
	u_{tt}-\Delta u + a(x)u_t = 0,& x\in \Omega,\ t>0,
\\
	u(x,t)=0,&x\in \partial \Omega, \ t>0,
\\
	(u,u_t)(x,0) = (u_0, u_1)(x),&x\in \Omega.
	\end{array}\right.
\end{align}
Here
$u=u(x,t)$
is a real-valued unknown function.
The coefficient of the damping term
$a(x)$
is a radially symmetric function on the whole space
$\mathbb{R}^N$
satisfying
$a\in C^2(\mathbb{R}^N)$
and
\begin{align}
\tag{a0}\label{a0}
	a(x) = a_0 |x|^{-\alpha} + o(|x|^{-\alpha})\quad \text{as }|x|\to \infty
\end{align}
with some
$a_0>0$
and
$\alpha \in [0,1)$,
and then we may assume $0\notin\overline{\Omega}$ without loss of generality.
The initial data
$(u_0, u_1)$
belong to
$[H^3(\Omega)\cap H^1_0(\Omega)]\times [H^2(\Omega)\cap H^1_0(\Omega)]$
with the compatibility condition of second order
and satisfy
${\rm supp\,}(u_0,u_1) \subset \{ x \in \Omega ; |x| < R_0 \}$
with some
$R_0>0$.
Here we recall that
for a nonnegative integer $k$,
with the assumption $a\in C^{\max ( k,2 )}$,
the initial data
$(u_0,u_1) \in H^{k+1} \times H^k$
satisfy the compatibility condition of order $k$ if
$u_p=0$ on $\partial \Omega$ for $p=0,1,\ldots,k$, where
$u_p$ are successively defined by
$u_p=\Delta u_{p-2} - a(x) u_{p-1}\ (p=2,\ldots,k)$.
Then, it is known that \eqref{dw} admits a unique solution
\[
	u \in \bigcap_{i=0}^{k+1}C^{i}([0,\infty);H^{k+1-i}(\Omega))
\]
(see Ikawa \cite[Theorem 2]{Ik68}).

Also, we consider the initial-boundary value problem to the corresponding heat equation
\begin{align}
\label{heat}
	\left\{ \begin{array}{ll}
	v_t - a(x)^{-1} \Delta v = 0,&x\in \Omega, t>0,\\
	v(x,t) = 0,&x\in \partial \Omega, t>0,\\
	v(x,0) = v_0(x),&x\in \Omega.
	\end{array}\right.
\end{align}

Our aim is to prove that the asymptotic profile of the solution to \eqref{dw}
is given by a solution of \eqref{heat} as time tends to infinity.
Namely, the solution of the damped wave equation \eqref{dw}
has the diffusion phenomena.

The diffusive structure of the damped wave equation has been studied
for a long time.
Matsumura \cite{Ma76} proved $L^p$-$L^q$ estimates of solutions
in the case
$\Omega = \R^N$
and
$a(x)\equiv 1$.
On the other hand,
Mochizuki \cite{Mo76}
considered the case where
$\Omega = \R^N$
and the coefficient
$a=a(x,t)$
satisfies
$0\le a(x,t) \le C(1+|x|)^{-\alpha}$
with
$\alpha>1$,
and proved that in general the energy of the solution does not decay to zero.
Moreover, for some initial data, the solution approaches to a solution to
the wave equation without damping in the energy sense.
Mochizuki and Nakazawa \cite{MoNa96} generalized it to
the case of exterior domains with
star-shaped complement. 
Matsuyama \cite{Mat02} further extended it to more general domains
by adding the assumption of the positivity of $a(x,t)$ around $\partial\Omega$.

On the other hand, when the coefficient
$a$
satisfies
$a(x) \ge C (1+|x|)^{-\alpha}$
with some
$\alpha \in [0,1)$,
Matsumura \cite{Ma77} and Uesaka \cite{Ue79}
showed that the energy of the solution decays to zero.
When
$\Omega = \mathbb{R}^N$
and the coefficient
$a(x)$
is radially symmetric and
satisfies \eqref{a0},
Todorova and Yordanov \cite{ToYo09}
introduced a suitable weight function of the form
$t^{-m}e^{\psi}$,
which originates from \cite{ToYo01} and \cite{Ik05IJPAM},
and proved an almost optimal energy estimate
\[
	\int_{\mathbb{R}^N} ( |u_t|^2 + |\nabla u|^2 ) dx
	\le C(1+t)^{-\frac{N-\alpha}{2-\alpha}-1+\ep} \| (u_0,u_1) \|_{H^1\times L^2}^{2}.
\]
After that, Radu, Todorova and Yordanov \cite{RaToYo09}
extended it
to estimates for higher order derivatives.
Nishihara \cite{Ni10} also established a weighted energy method
similar to \cite{ToYo09} and
obtained decay estimates of the solution to the nonlinear problem
\[
	u_{tt}-\Delta u + (1+|x|^2)^{-\alpha/2}u_t + |u|^{p-1}u = 0.
\]
Based on the energy method of \cite{Ni10},
the second author \cite{Wa14} proved the same type estimates
as those obtained in \cite{RaToYo09}
and applied them to the diffusion phenomena for the damped wave equation \eqref{dw}
when
$\Omega = \mathbb{R}^N$
and
$a(x)= (1+|x|^2)^{-\alpha/2}$ with $0\le \alpha <1$.

Ikehata, Todorova and Yordanov \cite{IkToYo13} considered the damping satisfying
$a(x) \ge \mu (1+|x|^2)^{-1/2}$, which corresponds to the case
$\alpha =1$.
They proved that
the energy of the solution decays as $O(t^{-\mu})$ if $1<\mu <N$
and $O(t^{-N+\varepsilon})$ with arbitrary $\varepsilon>0$ if $\mu \ge N$,
respectively.

Recently, Nishiyama \cite{Nis15} studied the abstract damped wave equation
$u_{tt} +Au + Bu_t =0$
and proved the diffusion phenomena
by using the resolvent argument
when the damping term $B$ is strictly positive.
Radu, Todorova and Yordanov \cite{RaToYopre} considered the equation
$Cu_{tt} + Au + u_t = 0$
and obtained a similar result via the method of diffusion approximation.

In this paper, we prove the diffusion phenomena for the damped wave equation \eqref{dw}
in an exterior domain and for more general damping term than \cite{Wa14}.
Moreover, we prove the optimality of the decay rate for the solution of the
corresponding parabolic problem \eqref{heat} in a special case,
which implies that the asymptotic profile of the solution of the damped wave equation \eqref{dw}
is actually given by a solution of the corresponding heat equation \eqref{heat}
(see Proposition \ref{prop_opt}).
Our main result is the following:

\begin{theorem}\label{thm1}
Let
$u$
be a solution of \eqref{dw} with initial data
$(u_0,u_1)$
and let
$v$
be a solution of \eqref{heat} with
$v_0 = u_0 + a(x)^{-1}u_1$.
Then, for any $\ep>0$, there exists $C>0$ such that we have
\begin{align*}
	\| \sqrt{a(\cdot)} ( u(\cdot,t) - v(\cdot,t) ) \|_{L^2(\Omega)}
	\le C (1+t)^{-\frac{N-\alpha}{2(2-\alpha)}-\frac{2-2\alpha}{2-\alpha}+\ep}
		\| (u_0, u_1) \|_{H^2\times H^1(\Omega)}
\end{align*}
for any $t \ge 1$.
\end{theorem}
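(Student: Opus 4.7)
The strategy is to set $w = u - v$, derive an inhomogeneous damped wave equation for $w$ whose forcing term comes from $v$, and estimate $w$ by combining parabolic semigroup bounds for $v$ with a Todorova--Yordanov type weighted energy estimate for $w$ itself. A direct computation using $a(x) v_t = \Delta v$ gives
\[
w_{tt} - \Delta w + a(x)\, w_t = -v_{tt},
\qquad w = 0 \text{ on } \pa\Omega,
\]
with initial conditions $w(\cdot,0) = -a(\cdot)^{-1} u_1$ and $w_t(\cdot,0) = u_1 - a(\cdot)^{-1}\Delta v_0$. The special choice $v_0 = u_0 + a^{-1} u_1$ in the statement is precisely what keeps $w(\cdot,0)$ small in the natural weighted $L^2$ norm; without it, the extra factor $(1+t)^{-\frac{2-2\alpha}{2-\alpha}}$ beyond the heat decay would be lost.

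The first ingredient is a semigroup estimate for the degenerate parabolic equation \eqref{heat}. Regarding $-a^{-1}\Delta$ as the self-adjoint Friedrichs realization on the weighted space $L^2(\Omega; a\,dx)$ (as hinted in the abstract), I expect an estimate of the form
\[
\| \sqrt{a(\cdot)}\,\pa_t^k v(\cdot,t) \|_{L^2(\Omega)} \le C(1+t)^{-\frac{N-\alpha}{2(2-\alpha)} - k}\|v_0\|_X
\]
for $k = 0, 1, 2$ in a suitable weighted space $X$; the exponent $\frac{N-\alpha}{2(2-\alpha)}$ reflects the parabolic scaling $|x|\sim t^{1/(2-\alpha)}$ of $a v_t=\Delta v$ against the measure $a\,dx$, while each time differentiation costs an extra factor $(1+t)^{-1}$ in this scale (recovering the classical $t^{-N/4}$ when $\alpha = 0$). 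The second ingredient is the weighted energy method of \cite{ToYo09,Ni10,Wa14}: multiplying the equation for $w$ by a test function of the form $e^{2\psi}(t^m w_t + t^{m-1}\beta w)$, with a weight $\psi(x,t)$ adapted to $a(x)$ (roughly $\psi \sim -A\langle x\rangle^{2-\alpha}/t$ for $A>0$ small enough that an eikonal-type inequality holds), and integrating by parts yields an identity controlling $\int a\, e^{2\psi} t^m w^2\,dx$ by the initial data plus a forcing contribution $\int e^{2\psi} t^m v_{tt}^2\,dx$. Boundary terms on $\pa\Omega$ vanish by the Dirichlet condition, and the contribution at infinity is controlled by the support hypothesis on $(u_0,u_1)$. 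Feeding the semigroup decay of $v_{tt}$ into the resulting inequality and optimizing the parameters $m,\beta$ then produces the asserted rate.

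I expect two main technical obstacles. First, the Todorova--Yordanov weight $\psi$ is originally constructed on $\R^N$; on the exterior domain $\Omega$ one must verify that the eikonal-type supersolution property survives near $\pa\Omega$ (or enforce it by cut-off), but since the initial data are compactly supported and $\Omega$ is fixed, $\psi$ stays bounded on $\pa\Omega$ and the exterior problem becomes a controlled perturbation of the whole-space case already treated in \cite{Wa14}. Second, absorbing the loss exactly into the $\ep$ in the exponent requires a bootstrap argument: one first proves decay with a slightly weaker rate, then reinjects it into the source-term estimate. Hitting the precise gain $\frac{2-2\alpha}{2-\alpha}$, rather than a single power of $(1+t)^{-1}$, is what forces the use of two levels of parabolic time differentiation on $v$, matching the appearance of $-v_{tt}$ as the forcing term.
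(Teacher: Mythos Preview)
Your route is the natural dual of the paper's, but it is \emph{not} what the paper does, and as written it has a real gap. The paper never writes a forced damped wave equation for $w=u-v$. Instead it rewrites \eqref{dw} as the forced heat equation $u_t-a^{-1}\Delta u=-a^{-1}u_{tt}$, applies Duhamel for the semigroup $e^{tL_*}$ (Lemma~\ref{Duhamel}), integrates by parts in $s$ on $[0,t/2]$, and obtains $u(t)-e^{tL_*}[u_0+a^{-1}u_1]$ as a sum of three remainders $J_1,J_2,J_3$ involving $u_t$ or $u_{tt}$ under the heat semigroup. The weighted energy estimates of Section~3 are applied to $u$ (and $\partial_t^k u$), never to $w$; the heat part is handled only through the $L^1_{d\mu}$--$L^2_{d\mu}$ and $L^2_{d\mu}$--$L^2_{d\mu}$ semigroup bounds of Proposition~\ref{embedding2}. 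The advantage is that $u$ has finite speed of propagation, so the exponentially growing weight $\Phi_{A,\beta}$ is harmless. Incidentally, the choice $v_0=u_0+a^{-1}u_1$ does not make $w(0)=-a^{-1}u_1$ ``small'' in any norm; in the paper's argument it is exactly what makes the boundary term at $s=0$ from the integration by parts combine with $e^{tL_*}u_0$ to yield $e^{tL_*}v_0$.

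The gap in your plan is the step ``the contribution at infinity is controlled by the support hypothesis on $(u_0,u_1)$.'' That is false for $w$: the heat solution $v$ spreads instantaneously, so neither $w$ nor the forcing $-v_{tt}$ is compactly supported. The Todorova--Yordanov weight is $\Phi_{A,\beta}(x,t)=\exp(\beta A(x)/(1+t))$ with $A(x)\sim |x|^{2-\alpha}$, i.e.\ it \emph{grows} at infinity (your sign $\psi\sim -A\langle x\rangle^{2-\alpha}/t$ is backwards; with the decaying weight you lose the Hardy inequality (Lemma~\ref{lem_ha}) that produces the $1/h_a$ exponent). To run the energy argument on $w$ you would therefore need $\int_\Omega \Phi_{A,\beta}|v_{tt}|^2\,dx<\infty$, which amounts to Gaussian-type weighted bounds on the heat kernel of $L_*$; the paper proves nothing of the sort, and you do not indicate how to get them. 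In addition, Section~3 repeatedly uses Lemma~\ref{fp} to replace $a(x)^{-1}$ by $C(t_0+t)^\alpha$ on $\operatorname{supp}u(\cdot,t)$ (see the proofs of Lemmas~\ref{lem_en1} and~\ref{lem_en2}); this step simply fails for $w$. Your approach could perhaps be salvaged with substantial extra input (pointwise heat-kernel estimates for $e^{tL_*}$ on $\Omega$), but as stated it does not close.
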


The proof of the above theorem consists of following three parts.

Firstly, in the next section,
we investigate the heat semigroup
$e^{tL_{\ast}}$
generated by
the Friedrichs extension
$L_{\ast}$
of
$L=a(x)^{-1}\Delta$.
In particular, we prove the semigroup estimate
\[
	\| \sqrt{a(\cdot)} e^{tL_{\ast}} f \|_{L^2}
	\le Ct^{-\frac{N-\alpha}{2(2-\alpha)}} \| a(\cdot) f \|_{L^1}
\]
by using Beurling-Deny criteria (see e.g., Ouhabaz \cite[Section 2]{Ouhabaz}) and 
weighted Gagliardo-Nirenberg inequalities. 
A similar argument can be found in Liskevich and Sobol \cite{LS03}
($L^p$-analysis of $L$ is studied in Metafune and Spina \cite{MetSpi14} 
and Metafune, Okazawa, Spina and the first author \cite{MOSS}.)
Moreover, under some additional assumptions,
we also have the optimality of the above estimate.

Secondly, in Section 3, 
we prove the almost sharp higher-order energy estimate of solutions to
the damped wave equation \eqref{dw}
by using the Todorova-Yordanov-type weight function
\[
	\Phi_{A,\beta}(x,t) = \exp\left( \beta \frac{A(x)}{1+t} \right).
\]
This has also been proved by Radu, Todorova and Yordanov \cite{RaToYo09} when
$\Omega = \mathbb{R}^N$.
In this paper, we give an alternate proof based on the argument of \cite{Ni10}
and a Hardy-type inequality. 

In the final step, in Section 4,
we rewrite the difference of solutions to \eqref{dw} and \eqref{heat} as
\begin{align*}
	u(t) - v(t)&=
		-\int_{t/2}^t e^{(t-s)L_{\ast}} [ a(\cdot)^{-1} u_{ss}(s) ] ds \\
		&\quad - e^{\frac{t}{2}L_{\ast}} [a(\cdot)^{-1} u_t( t/2 )] \\
		&\quad - \int_0^{t/2} \frac{\pa}{\pa s}
			\left( e^{(t-s)L_{\ast}} [ a(\cdot)^{-1}u_s(s) ] \right) ds.
\end{align*}
Applying the heat semigroup estimate for
$e^{tL_{\ast}}$
and the energy estimate for the time-derivatives of $u$,
we prove that the each term of the right-hand side decays faster.

For the end of this section, we introduce the notation used throughout this paper.
The letter $C$ indicates the generic constant, which may change from line to line.
We denote the set of all compactly supported smooth functions in $U$ $(\subset \R^N)$ as $C_c^\infty(U)$ 
and
the $L^p$ norm by $\| \cdot \|_{L^p}$, that is,
\[
	\| f\|_{L^p} =
	\left\{ \begin{array}{ll}
	\displaystyle \left( \int_{\Omega} | f(x) |^p dx \right)^{1/p} &(1\le p <\infty),\\
	\displaystyle {\rm ess\, sup\, } |f(x)| &(p=\infty).
	\end{array} \right.
\]
For a nonnegative integer $k$, $\| \cdot \|_{H^k}$ denotes the Sobolev norm, that is,
\[
	\| f \|_{H^k} = \Bigg(\sum_{|\alpha|\le k} \| \partial_x^k f \|_{L^2}^2\Bigg)^\frac{1}{2}.
\]
For an interval $I$ and a Banach space $X$,
we define
$C^r (I; X)$
as the space of $r$-times continuously differentiable mapping from $I$ to $X$
with respect to the topology in $X$.

\section{The semigroup generated by $a(x)^{-1}\Delta$}
In this section, we study the semigroup generated by the operator
\[
   L
=
   a(x)^{-1}\Delta
\quad 
   \text{in}\ \Omega
\]
endowed with the Dirichlet boundary condition. 
Since the coefficient $a$ is positive and satisfies \eqref{a0}, 
we may assume that there exists $c_0\in (0,1)$ such that
\begin{equation}\label{a0-mod}
   c_0|x|^{-\alpha}
\leq 
   a(x)
\leq 
   c_0^{-1}|x|^{-\alpha},\quad x\in \overline{\Omega}.
\end{equation}
We remark that the results of this section requires only \eqref{a0-mod}
and we do not need that $a$ is radially symmetric and satisfies \eqref{a0}.
We introduce the weighted $L^p$-spaces 
\begin{align*}
   L^p_{d\mu}
:=
   \left\{
      f\in L^p_{\rm loc}(\Omega)
   \;;\;
      \|f\|_{L^p_{d\mu}}:=\int_{\Omega}
         |f(x)|^p a(x)
      \,dx<\infty
   \right\}, 
\quad 
   1\leq p<\infty
\end{align*}
and the bilinear form 
\begin{align*}
\begin{cases}
   \mathfrak{a}(u,v)
:=
   \displaystyle
   \int_{\Omega}
      \nabla u(x)\cdot \nabla v(x)
   \,dx, 
\\[8pt]
   D(\mathfrak{a})
:=
   \Big\{
      u\in C_c^\infty(\overline{\Omega})
   \;;\;
      u(x)=0\quad\forall x\in \pa\Omega
   \Big\}
\end{cases}
\end{align*}
in a Hilbert space $L^2_{d\mu}$. 
Then the form $\mathfrak{a}$ is closable, and therefore, 
we denote $\mathfrak{a}_*$ as a closure of $\mathfrak{a}$. 
Then we can see that
\begin{lemma}
\label{domain.form}
The bilinear form $\mathfrak{a}_*$ can be characterized as follows:
\begin{align}\label{eq:form.dom}
   &D(\mathfrak{a}_*)
=
   \left\{
      u\in L^2_{d\mu}\cap \dot{H}^1(\Omega)
   ;
      \int_{\Omega}
         \frac{\pa u}{\pa x_j}\varphi
      \,dx
   =
      -\int_{\Omega}
         u\frac{\pa \varphi}{\pa x_j}
      \,dx
   \;\;
      \forall \varphi\in C_c^\infty(\R^N)
   \right\},
\\
\label{eq:form}
&\mathfrak{a}_*(u,v)
=
   \int_{\Omega}
      \nabla u(x)\cdot \nabla v(x)
   \,dx.
\end{align}
\end{lemma}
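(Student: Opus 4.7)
My plan is to establish closability of the form $(\mathfrak{a}, D(\mathfrak{a}))$ on $L^2_{d\mu}$ and then identify its closure with the space $V$ appearing on the right-hand side of \eqref{eq:form.dom} by proving two inclusions. Since $\mathfrak{a}$ is symmetric and nonnegative, closability reduces to the implication: if $u_n \in D(\mathfrak{a})$ with $u_n \to 0$ in $L^2_{d\mu}$ and $\{\nabla u_n\}$ is Cauchy in $L^2(\Omega)^N$, then the limit of $\{\nabla u_n\}$ is zero. I would observe that \eqref{a0-mod} forces $a$ to be bounded above and below by positive constants on any bounded subset of $\overline{\Omega}$, so $L^2_{d\mu}$-convergence implies $L^2_{\mathrm{loc}}$-convergence, hence distributional convergence, and this propagates to the gradients. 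Applying the same observation to a form-Cauchy sequence $u_n \in D(\mathfrak{a})$ with $u_n \to u$ in $L^2_{d\mu}$ identifies the $L^2$-limit of $\nabla u_n$ with $\nabla u$, so $u$ lies in $\dot H^1(\Omega)$; passing to the limit in the integration-by-parts identity
\[
   \int_{\Omega} \frac{\pa u_n}{\pa x_j}\varphi \, dx = -\int_{\Omega} u_n \frac{\pa \varphi}{\pa x_j}\, dx, \qquad \varphi \in C_c^\infty(\R^N),
\]
(valid with no boundary term because $u_n|_{\pa\Omega}=0$) then proves $D(\mathfrak{a}_*) \subseteq V$ together with the formula \eqref{eq:form} on this subspace.

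For the reverse inclusion $V \subseteq D(\mathfrak{a}_*)$, I would approximate a given $u \in V$ in two stages. The first stage is a spatial cutoff: choose $\eta_R \in C_c^\infty(\R^N)$ with $\eta_R \equiv 1$ on $B_R$, $\mathrm{supp}\,\eta_R \subset B_{2R}$, $|\nabla\eta_R| \le C/R$, so that $u\eta_R \in V$ has compact support in $\overline{\Omega}$ and $u\eta_R \to u$ in $L^2_{d\mu}$ by dominated convergence. Splitting $\nabla(u\eta_R) = \eta_R\nabla u + u\nabla\eta_R$, the first term tends to $\nabla u$ in $L^2(\Omega)$, while for the second term I intend to use \eqref{a0-mod} on the annulus $B_{2R}\setminus B_R$ to get
\[
   \|u\nabla\eta_R\|_{L^2}^2
   \le \frac{C^2}{R^2}\int_{B_{2R}\setminus B_R} |u|^2\,dx
   \le C'R^{\alpha-2}\|u\|_{L^2_{d\mu}}^2 \to 0
\]
thanks to $\alpha<1$. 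The second stage handles compactly supported $u \in V$: the integration-by-parts identity built into $V$ says exactly that the zero extension of $u$ to $\R^N$ is in $H^1(\R^N)$, hence $u \in H^1_0(\Omega)$. Since $\pa\Omega$ is smooth, the standard density $C_c^\infty(\Omega) \subset D(\mathfrak{a})$ is dense in $H^1_0(\Omega)$, and on the bounded support $a$ is bounded, so $H^1$-approximation automatically yields $L^2_{d\mu}$-approximation with simultaneous $L^2$-convergence of the gradients.

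The main technical obstacle will be the spatial truncation: $u$ need not belong to $L^2(\Omega)$, only to the weighted space $L^2_{d\mu}$, and the cutoff derivative must be absorbed against the degeneracy of $a$ at infinity. The gain $R^{\alpha-2}$ obtained above is the device that closes the argument and depends essentially on the standing assumption $\alpha\in[0,1)$, with room to spare. All other steps are routine applications of distributional limit passages and the $H^1_0$-density theory on smooth bounded pieces of $\Omega$.
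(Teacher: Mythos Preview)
Your proposal is correct and follows essentially the same route as the paper: show $D(\mathfrak{a}_*)\subseteq V$ by closedness of $V$, and prove $V\subseteq D(\mathfrak{a}_*)$ by spatial truncation followed by $H^1_0$-approximation on the truncated function. The only noticeable difference is the choice of cutoff: the paper uses a logarithmic cutoff $\zeta_n(x)=\zeta(\log|x|-\log R-n)$, so that $|\nabla\zeta_n|\le |x|^{-1}\chi_{\{|x|>Re^n\}}\le C a(x)^{1/2}\chi_{\{|x|>Re^n\}}$ and the cross term is controlled by the \emph{tail} $\int_{|x|>Re^n}a|u|^2\to 0$, whereas you use a linear-scale cutoff and obtain the explicit power gain $R^{\alpha-2}$ against the full $\|u\|_{L^2_{d\mu}}$ norm; both work comfortably for $\alpha<1$ (indeed for any $\alpha<2$), and your version is arguably cleaner.
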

\begin{proof}
We denote $D$ as the function space on the right-hand side of \eqref{eq:form.dom}.
It is clear that $D(\mathfrak{a})\subset D$. 
Since $D$ is closed with respect to the norm 
\[
\|u\|_{D}:=(\|\nabla u\|_{L^2(\Omega)}^2+\|u\|_{L^2_{d\mu}}^2)^\frac{1}{2},
\]
we deduce $D(\mathfrak{a}_*)\subset D$. 

Conversely, let $u\in D$. To prove $u\in D(\mathfrak{a}_*)$,
it suffices to find the sequence $\{u_n\}_n\in D(\mathfrak{a})$ such that
\[
u_n\to u\ \text{in}\ L^2_{d\mu}\quad \text{as}\ n\to \infty, 
\quad
\mathfrak{a}(u_n-u_m,u_n-u_m)\to 0\quad\text{as}\ n,m\to \infty. 
\]
Set a function $\zeta\in C^\infty(\R;[0,1])$ as
\[
   \zeta(s)
=
   \begin{cases}
      1 & {\rm if}\ s<0, 
   \\
      0 & {\rm if}\ s>1 
   \end{cases}
\]
and define a family of cut-off functions 
$\{\zeta_n\}_n\subset C_c^\infty(\R^N;[0,1])$ as 
\[
   \zeta_n(x)
=
   \zeta\big(\log(|x|)-\log R-n\big).
\]
Then we take $u_n(x):=\zeta_n(x)u(x)$ for $n\in\N$.
Observe that 
$u_n\in H^1_0(\Omega)$, $u_n\to u$ in $L^2_{d\mu}$. 
Moreover, for $n,m\in\N$ with $n<m$, 
\begin{align*}
   \int_{\Omega}
      |\nabla (u_n-u_m)|^2
   \,dx
&=
   \int_{\Omega}
      |\nabla ((\zeta_m-\zeta_n)u)|^2
   \,dx
\\
&\leq 
   2
   \int_{\Omega}
      |\zeta_m-\zeta_n|^2\,|\nabla u|^2
   \,dx
   +
   2
   \int_{\Omega}
      |\nabla(\zeta_m-\zeta_n)|^2\,|u|^2
   \,dx.
\end{align*}
Noting that $a$ satisfies \eqref{a0-mod} and then
\[
   |\nabla \zeta_n|
=
   \left|
      \frac{x}{|x|^2}\zeta'\big(\log(|x|)-\log R-n\big)
   \right|
\leq 
   \chi_{\{|x|>Re^{n}\}}|x|^{-1}
\leq 
   C\chi_{\{|x|>Re^{n}\}}a(x)^{\frac{1}{2}}, 
\]
for some constant $C>0$
($\chi_U$ denotes the indicator function of $U$), we obtain 
\[
\int_{\Omega}
|\nabla (u_n-u_m)|^2
\,dx\to 0
\]
as $n,m\to \infty$. Since for every $n\in\N$, 
$u_n$ is compactly supported in $\overline{\Omega}$, 
$u_n$ can be also approximated by functions 
in $C_c^\infty(\overline{\Omega})$ in the sense of $H^1_0(\Omega)$-topology.
This means that $u\in D(\mathfrak{a}_*)$. 
Since \eqref{eq:form} can be verified by the above approximation, 
the proof is completed.
\end{proof}

\begin{lemma}[The Friedrichs extension]\label{L*}
The operator $-L_*$ in $L^2_{d\mu}$ defined by 
\begin{align*}
   D(L_*)
	&:=
   \Big\{
      u\in D(\mathfrak{a}_*)
   \;;\; 
      \exists f\in L^2_{\mu}\text{\ s.t.\ }
            \mathfrak{a}_*(u,v)
         =
            (f,v)_{L^2_{d\mu}}
         \quad
            \forall v\in D(\mathfrak{a}_*)
   \Big\},\\
   -L_*u &:=f
\end{align*}
is nonnegative and selfadjoint in $L^2_{d\mu}$. 
Therefore $L_*$ generates an analytic semigroup $e^{tL_*}$ on $L^2_{d\mu}$ 
and satisfies
\[
   \|e^{tL_*}f\|_{L^2_{d\mu}}
\leq 
   \|f\|_{L^2_{d\mu}}, 
\quad 
   \|L_*e^{tL_*}f\|_{L^2_{d\mu}}
\leq 
   \frac{1}{t}\|f\|_{L^2_{d\mu}}, 
\quad 
   \forall\ f\in L^2_{d\mu}.
\]
Furthermore, $L_{*}$ is an extension of $L$ 
defined on $C_c^\infty(\overline{\Omega})$ 
with Dirichlet boundary condition.
\end{lemma}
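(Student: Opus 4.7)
The plan is to deduce everything from the classical representation theorem for closed, densely defined, symmetric, nonnegative sesquilinear forms on a Hilbert space (Kato's first representation theorem, in the version presented e.g.\ in Ouhabaz \cite{Ouhabaz}), applied to $\mathfrak{a}_*$ on $L^2_{d\mu}$. First I would verify the hypotheses: density of $D(\mathfrak{a}_*)$ in $L^2_{d\mu}$ follows from a routine approximation argument using that $a$ is locally bounded above and below by positive constants on any bounded subset of $\overline{\Omega}$, so that the known density of $\{u\in C_c^\infty(\overline{\Omega}):u|_{\partial\Omega}=0\}$ in $L^2(\Omega)$ transfers to $L^2_{d\mu}$ after truncating in $|x|$; closedness is automatic because $\mathfrak{a}_*$ is by construction the closure of $\mathfrak{a}$; and symmetry together with nonnegativity are immediate from the formula \eqref{eq:form}. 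Kato's theorem then produces a unique nonnegative selfadjoint operator $-L_*$ in $L^2_{d\mu}$ whose domain is precisely the set described in the statement.

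Next I would read off the analytic semigroup from the spectral theorem applied to $-L_*\ge 0$: contractivity of $e^{tL_*}$ amounts to $\sup_{\lambda\ge 0} e^{-t\lambda}=1$, and the bound on $L_* e^{tL_*}$ is the standard functional-calculus estimate $\sup_{\lambda\ge 0}\lambda e^{-t\lambda}=(et)^{-1}\le t^{-1}$; analyticity follows because a nonnegative selfadjoint operator is sectorial with sector angle $\pi/2$.

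Finally, to check that $L_*$ extends $L$ on smooth compactly supported functions vanishing on $\partial\Omega$, I would take such a $u$ and any $v\in D(\mathfrak{a}_*)$. The characterization \eqref{eq:form.dom} says that the distributional derivatives of $v$ computed on $\R^N$ (after extension by zero) coincide with those computed on $\Omega$, which forces the trace of $v$ on $\partial\Omega$ to vanish. Since $u$ itself is smooth, compactly supported, and vanishes on $\partial\Omega$, Green's identity yields
\[
\mathfrak{a}_*(u,v)=\int_\Omega \nabla u\cdot\nabla v\,dx
=-\int_\Omega (\Delta u)\,v\,dx
=\int_\Omega \bigl(-a(x)^{-1}\Delta u\bigr)\,v\,a(x)\,dx
=(-Lu,v)_{L^2_{d\mu}},
\]
so by the defining property of $D(L_*)$ we conclude $u\in D(L_*)$ and $L_*u=Lu$. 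I do not anticipate a real obstacle; the only delicate points are the $L^2_{d\mu}$-density (handled by local boundedness of $a$) and the boundary-trace vanishing in Green's identity (supplied directly by Lemma \ref{domain.form}), and the rest is pure abstract form theory.
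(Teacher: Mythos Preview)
Your proposal is correct and follows essentially the same route as the paper: invoke the standard representation/Friedrichs-extension theorem for the closed nonnegative symmetric form $\mathfrak{a}_*$ (the paper cites Reed--Simon \cite[Theorem X.23]{RS2} rather than Kato/Ouhabaz), then verify $L_*\supset L$ by an integration-by-parts computation. The only cosmetic difference is that the paper checks $\mathfrak{a}_*(u,v)=(-Lu,v)_{L^2_{d\mu}}$ for $v$ in the core $D(\mathfrak{a})$ and extends by density, whereas you test directly against $v\in D(\mathfrak{a}_*)$ via Lemma~\ref{domain.form}; your more explicit verification of density and of the spectral-calculus bounds $\|e^{tL_*}\|\le 1$, $\|L_*e^{tL_*}\|\le 1/t$ fills in details the paper leaves implicit.
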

\begin{proof}
By \cite[Theorem X.23]{RS2} we see that $-L_*$ 
is nonnegative and selfadjoint in $L^2_{d\mu}$. 
Moreover, for every $u,v\in D(\mathfrak{a})$,
\begin{align*}
   (-Lu,v)_{L^2_{d\mu}}
=
   \int_{\Omega}
      (-Lu)v
   \,d\mu
=
   \int_{\Omega}
      (-\Delta u)v
   \,dx
=
\mathfrak{a}(u,v)
=
\mathfrak{a}_*(u,v).
\end{align*}
Therefore $u\in D(L_*)$ and $L_*u=Lu$.
\end{proof}

\begin{lemma}
\label{domain.op}
We have
\[
\big\{
  u\in H^2(\Omega)\cap H_0^1(\Omega)\;;\;a(x)^{-\frac{1}{2}}\Delta u\in L^2(\Omega)
  \big\}\subset D(L_*)
\] 
and its inclusion is continuous. 
\end{lemma}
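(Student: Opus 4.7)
The plan is to verify the three defining properties of $D(L_*)$ from Lemma \ref{L*} directly, invoking the characterization of $D(\mathfrak{a}_*)$ supplied by Lemma \ref{domain.form}. The natural candidate for $-L_*u$ is $f:=-a(x)^{-1}\Delta u$, and the hypothesis $a^{-1/2}\Delta u\in L^2(\Omega)$ gives immediately
\[
   \|f\|_{L^2_{d\mu}}^2
=
   \int_{\Omega} |a^{-1}\Delta u|^2\,a\,dx
=
   \|a^{-1/2}\Delta u\|_{L^2(\Omega)}^2
<
   \infty,
\]
so $f\in L^2_{d\mu}$.

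Next I would show $u\in D(\mathfrak{a}_*)$. Since $0\notin\overline{\Omega}$, the lower bound $|x|\ge R_1>0$ on $\Omega$ combined with \eqref{a0-mod} forces $a$ to be bounded on $\Omega$, whence $\|u\|_{L^2_{d\mu}}\le C\|u\|_{L^2(\Omega)}$; the inclusion $u\in H^2(\Omega)$ yields $u\in\dot H^1(\Omega)$; and $u\in H^1_0(\Omega)$ together with smoothness of $\partial\Omega$ gives that the zero-extension of $u$ to $\R^N$ belongs to $H^1(\R^N)$, which is exactly the integration-by-parts condition against test functions $\varphi\in C_c^\infty(\R^N)$ appearing in \eqref{eq:form.dom}. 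Hence all three requirements in the characterization of $D(\mathfrak{a}_*)$ are met.

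I would then establish the identity $\mathfrak{a}_*(u,v)=(f,v)_{L^2_{d\mu}}$ first for $v\in D(\mathfrak{a})$, where $v$ is smooth with $v|_{\partial\Omega}=0$, so Green's formula applies with vanishing boundary term and gives
\[
   \mathfrak{a}(u,v)
=
   \int_{\Omega}\nabla u\cdot\nabla v\,dx
=
   -\int_{\Omega}(\Delta u)\,v\,dx
=
   \int_{\Omega} f\,v\,a\,dx
=
   (f,v)_{L^2_{d\mu}}.
\]
To upgrade this to arbitrary $v\in D(\mathfrak{a}_*)$, I would use that $\mathfrak{a}_*$ is by definition the closure of $\mathfrak{a}$: choose $v_n\in D(\mathfrak{a})$ with $v_n\to v$ in $L^2_{d\mu}$ and $\nabla v_n\to\nabla v$ in $L^2(\Omega)$, and pass to the limit on both sides. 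This yields $u\in D(L_*)$ with $L_* u=-f=a^{-1}\Delta u$.

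Continuity of the inclusion is then immediate: the graph norm of $D(L_*)$ is controlled by $\|u\|_{L^2_{d\mu}}+\|L_*u\|_{L^2_{d\mu}}\le C\|u\|_{L^2(\Omega)}+\|a^{-1/2}\Delta u\|_{L^2(\Omega)}$, which is dominated by the natural norm on the left-hand side. The only mildly delicate point is the weak-derivative condition in \eqref{eq:form.dom}, where the geometry of $\Omega$ and the assumption $u\in H^1_0(\Omega)$ (not merely $H^1$) are essential; everything else is a routine integration-by-parts followed by a density argument, made painless by the boundedness of $a$ on $\Omega$.
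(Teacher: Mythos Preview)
Your proof is correct and follows essentially the same route as the paper: verify $u\in D(\mathfrak{a}_*)$ via the characterization in Lemma~\ref{domain.form}, then check the form identity $\mathfrak{a}_*(u,\varphi)=(f,\varphi)_{L^2_{d\mu}}$ by integration by parts on the core $D(\mathfrak{a})$ and pass to the closure by density, with the graph-norm bound giving continuity. The paper is terser---it simply cites Lemma~\ref{domain.form} for $u\in D(\mathfrak{a}_*)$ and bounds $|\mathfrak{a}_*(u,\varphi)|$ rather than identifying $f$ explicitly---but the substance is identical.
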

\begin{proof} 
Let $u\in H^2(\Omega)\cap H_0^1(\Omega)$ satisfying $a(x)^{-\frac{1}{2}}\Delta u\in L^2(\Omega)$.
By Lemma \ref{domain.form}, we have $u\in D(\mathfrak{a}_*)$. 
Moreover, for every $\varphi\in D(\mathfrak{a})$, we have
\begin{align*}
\left|
   \mathfrak{a}_*(u,\varphi)
\right|
&=
\left|
   \int_{\Omega}\nabla u\cdot\nabla\varphi \,dx
\right|
\\
&=
\left|
\int_{\Omega}(-\Delta u)\varphi\,dx
\right|
\leq 
\|a(\cdot)^{-\frac{1}{2}}\Delta u\|_{L^2}
\|\varphi\|_{L^2_{d\mu}}.
\end{align*}
Therefore $u\in D(L_*)$ and $L_*u=a(x)^{-1}\Delta u$. Moreover, we have
\begin{align*}
\|u\|_{D(L_*)}^2
&=
\|u\|_{L^2_{d\mu}}^2
+\|L_*u\|_{L^2_{d\mu}}^2
\\
&
=\|a(\cdot)^{\frac{1}{2}}u\|_{L^2}^2
+\|a(\cdot)^{-\frac{1}{2}}\Delta u\|_{L^2}^2
\\
&
\leq \|a\|_{L^\infty}\|u\|_{L^2}^2
+\|a(\cdot)^{-\frac{1}{2}}\Delta u\|_{L^2}^2
\\
&
\leq \|a\|_{L^\infty}\|u\|_{H^2}^2
+\|a(\cdot)^{-\frac{1}{2}}\Delta u\|_{L^2}^2. 
\end{align*}
This inequality gives the continuity of the inclusion in the assertion.  
\end{proof}

\begin{lemma}
\label{submarkov}
The semigroup $\{e^{tL_*}\}_{t\geq0}$ given by Lemma \ref{L*}
is sub-markovian, that is, $e^{tL_*}$ is positively preserving:
\begin{align*}
0\leq f\in L^2_{d\mu}\quad \Longrightarrow \quad e^{tL_*}f\geq 0
\end{align*}
and $L^\infty$-contractive:
\begin{align*}
\|e^{tL_*}f\|_{L^\infty}\leq \|f\|_{L^\infty}, 
\quad \forall f\in L^2_{d\mu}\cap L^\infty(\Omega).
\end{align*}
\end{lemma}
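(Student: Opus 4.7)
The plan is to apply the Beurling-Deny criteria for symmetric Dirichlet forms (see \cite{Ouhabaz}). Since $\mathfrak{a}_*$ is the symmetric closed form associated with $-L_*$ in $L^2_{d\mu}$, positivity preservation of $e^{tL_*}$ will follow once we show that $u\in D(\mathfrak{a}_*)$ implies $|u|\in D(\mathfrak{a}_*)$ with $\mathfrak{a}_*(|u|,|u|)\le \mathfrak{a}_*(u,u)$, and $L^\infty$-contractivity will follow by the analogous stability under the truncation $u\mapsto u\wedge 1:=\min(u,1)$ when $u\ge 0$, together with $\mathfrak{a}_*(u\wedge 1, u\wedge 1)\le \mathfrak{a}_*(u,u)$.

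The task therefore reduces to stability of $D(\mathfrak{a}_*)$ under the nonlinear operations $u\mapsto |u|$ and $u\mapsto u\wedge 1$, plus the form inequalities. The starting point is the characterization \eqref{eq:form.dom}: a function $v$ belongs to $D(\mathfrak{a}_*)$ exactly when (i) $v\in L^2_{d\mu}$, (ii) $\nabla v\in L^2(\Omega)$, and (iii) the zero extension $\tilde v$ of $v$ to $\R^N$ satisfies $\nabla \tilde v=\widetilde{\nabla v}$ in $\mathcal{D}'(\R^N)$, i.e.\ $\tilde v\in H^1_{\rm loc}(\R^N)$ with no concentrated boundary contribution (this is the way the Dirichlet condition is encoded in the form).

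The main step is to verify these three properties for $|u|$ and, assuming $u\ge 0$, for $u\wedge 1$. Condition (i) is immediate since $|\,|u|\,|=|u|$ and $|u\wedge 1|\le |u|$ pointwise. For (ii), the standard chain rule for Sobolev functions gives $\nabla |u|=\mathrm{sgn}(u)\nabla u$ and $\nabla(u\wedge 1)=\chi_{\{u<1\}}\nabla u$, both dominated pointwise by $|\nabla u|\in L^2(\Omega)$. Condition (iii) is the only genuinely delicate point: when $u\ge 0$ the identities $\widetilde{|u|}=|\tilde u|$ and $\widetilde{u\wedge 1}=\tilde u\wedge 1$ hold, so applying the chain rule for $H^1_{\rm loc}(\R^N)$-functions to $\tilde u$ transfers the distributional identity from $u$ to $|u|$ and $u\wedge 1$.

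Once stability is in place, the pointwise bounds above yield
\[
   \mathfrak{a}_*(|u|,|u|)
=\int_{\Omega}|\nabla u|^2\,dx
=\mathfrak{a}_*(u,u),
\qquad
   \mathfrak{a}_*(u\wedge 1, u\wedge 1)
=\int_{\{u<1\}}|\nabla u|^2\,dx
\le \mathfrak{a}_*(u,u),
\]
so both Beurling-Deny conditions are satisfied and the assertions follow. The main obstacle is verification of condition (iii) for the nonlinear operations; once one reads \eqref{eq:form.dom} as an $H^1_0$-type statement about the zero extension, the argument reduces to a standard Sobolev chain-rule computation on $\R^N$.
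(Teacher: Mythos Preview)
Your proposal is correct and follows essentially the same route as the paper: both verify the Beurling--Deny criteria from \cite{Ouhabaz} by using the explicit characterization of $D(\mathfrak{a}_*)$ in Lemma~\ref{domain.form} to check stability under the relevant nonlinear operations, then compute the form (in)equalities directly from $\mathfrak{a}_*(u,v)=\int_\Omega\nabla u\cdot\nabla v\,dx$. The only cosmetic difference is in the $L^\infty$-contractivity step: the paper applies Ouhabaz's projection criterion with $Pu=(1\wedge|u|)\,\mathrm{sign}\,u$ and checks $\mathfrak{a}_*(Pu,u-Pu)=0$, which yields $L^\infty$-contractivity directly, whereas you use the classical Dirichlet-form formulation $\mathfrak{a}_*(u\wedge 1,u\wedge 1)\le\mathfrak{a}_*(u,u)$ for $u\ge 0$, which needs the already-established positivity to conclude; both are standard and equivalent here. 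One small slip in your write-up: the identity $\widetilde{|u|}=|\tilde u|$ holds for \emph{all} $u$, not only $u\ge 0$, and you do need it in that generality for the first criterion.
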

\begin{proof}
Let $u\in D(\mathfrak{a}_*)$ be fixed. 
We note that Lemma \ref{domain.form} implies
$|u|, Pu:=(1\wedge |u|){\rm sign}\,u\in D(\mathfrak{a}_*)$. 
Moreover, by direct computation, we have 
\[
\mathfrak{a}_*(|u|,|u|)=
\int_{\Omega}\big|\nabla |u|\big|^2\,dx
=
\int_{\Omega}|\nabla u|^2\,dx
=
\mathfrak{a}_*(u,u)\]
and 
\[
\mathfrak{a}_*(Pu,u-Pu)=
\int_{\Omega}|\nabla u|^2\chi_{\{|u|<1\}}(1-\chi_{\{|u|<1\}})\,dx=0.
\]
Applying \cite[Theorems 2.7 and 2.13]{Ouhabaz}, we respectively obtain 
the positivity and $L^\infty$-contractivity of $e^{tL_*}$.
\end{proof}

\begin{lemma}
[The embedding $D(\mathfrak{a}_*)\hookrightarrow L^q_{d\mu}$]
\label{embedding}
If $N\geq 3$, then there exists $C_{N,\alpha}>0$ such that 
\[
\|u\|_{L^{q_*}_{d\mu}}\leq C_{N,\alpha} \mathfrak{a}_*(u,u)^{\frac{1}{2}}, 
\quad 
\forall u\in D(\mathfrak{a}_*), 
\]
where $q_*:=\frac{2(N-\alpha)}{N-2}>2$. 
If $N=2$, then for every $2<q<\infty$, there exists $C_{2,\alpha,q}>0$ such that 
\begin{align*}
\|u\|_{L^q_{d\mu}}\leq 
C_{2,\alpha,q}\mathfrak{a}_*(u,u)^{\frac{1}{2}-\frac{1}{q}}
\|u\|_{L^2_{d\mu}}^{\frac{2}{q}}, 
\quad 
\forall u\in D(\mathfrak{a}_*). 
\end{align*}
\end{lemma}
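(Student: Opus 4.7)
The plan is to prove both inequalities first for $u \in D(\mathfrak{a})$, i.e., for $u \in C_c^\infty(\overline{\Omega})$ vanishing on $\pa\Omega$, and then extend by density. Indeed, once the inequality is known on $D(\mathfrak{a})$, any sequence that is Cauchy in the graph norm of $\mathfrak{a}_*$ is automatically Cauchy in the target space $L^q_{d\mu}$, so the bound passes to the closure characterized in Lemma \ref{domain.form}. For such $u$, extending by zero produces $\tilde u \in H^1(\R^N)$ with compact support contained in $\overline{\Omega}$, hence bounded away from the origin since $0 \notin \overline{\Omega}$. By \eqref{a0-mod}, $\|u\|_{L^q_{d\mu}}^q$ and $\|u\|_{L^2_{d\mu}}^2$ are comparable to the integrals against $|x|^{-\alpha}$, so matters reduce to weighted inequalities on $\R^N$ with weight $|x|^{-\alpha}$.

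For $N \geq 3$, I would interpolate between the Hardy inequality and the critical Sobolev embedding. Setting $\theta := \alpha/2 \in [0,1/2)$ and $2^* := 2N/(N-2)$, the identity $q_* = 2\theta + 2^*(1-\theta)$ gives the pointwise factorization
\[
|x|^{-\alpha}|\tilde u|^{q_*} = \bigl(|x|^{-2}|\tilde u|^2\bigr)^{\theta}\bigl(|\tilde u|^{2^*}\bigr)^{1-\theta}.
\]
Hölder's inequality with exponents $1/\theta$ and $1/(1-\theta)$ splits the integral into a Hardy term and a critical Sobolev term; the Hardy inequality in $\R^N$ controls the first factor by a power of $\|\nabla\tilde u\|_{L^2}^2$, the Sobolev embedding $\dot H^1(\R^N) \hookrightarrow L^{2^*}(\R^N)$ controls the second, and the two exponents combine to the desired power $q_*/2$ of $\mathfrak{a}_*(u,u)$.

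For $N = 2$, Hardy is unavailable and the critical Sobolev exponent degenerates, so the above route fails. The statement reduces to the Caffarelli--Kohn--Nirenberg type estimate
\[
\int_{\R^2} |x|^{-\alpha}|\tilde u|^q \, dx \leq C\|\nabla \tilde u\|_{L^2}^{q-2}\int_{\R^2} |x|^{-\alpha}|\tilde u|^2 \, dx,
\]
which is scale-invariant in $\tilde u$. The plan is a dyadic decomposition of $\R^2 \setminus\{0\}$ into annuli $A_k = \{2^k \leq |x| < 2^{k+1}\}$, $k \in \Z$: on each $A_k$ the weight $|x|^{-\alpha}$ is comparable to $2^{-k\alpha}$, and rescaling $A_k$ to unit size allows the standard two-dimensional Gagliardo--Nirenberg inequality to be applied with a constant independent of $k$. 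Summing the local bounds and invoking Hölder in the index $k$ against the weighted $L^2$ mass of $\tilde u$ on each shell recovers the claim. This $N = 2$ case is the main obstacle: the endpoint $L^\infty$ bound from $H^1$ is lost, so the clean interpolation argument from higher dimensions must be replaced by a more delicate dyadic localization in which the cross terms produced by the gradients of the cutoff functions have to be absorbed by the weighted $L^2$ piece rather than by a Hardy-type bound.
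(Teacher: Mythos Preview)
Your argument for $N\ge 3$ is exactly the paper's: the same pointwise factorization
\[
|x|^{-\alpha}|u|^{q_*}=(|x|^{-2}|u|^2)^{\alpha/2}(|u|^{2^*})^{1-\alpha/2},
\]
followed by H\"older, Hardy and the Sobolev embedding.

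For $N=2$ the paper takes a completely different and much shorter route, and your dyadic plan has a genuine gap. When you rescale an annulus $A_k$ to unit size and apply the two--dimensional Gagliardo--Nirenberg inequality (either to $\psi_k u$ or to the restriction of $u$), the commutator/lower--order contribution is $\sim 2^{-2k}\|u\|_{L^2(A_k)}^2$. Carrying this through the sum gives, besides the desired term $\|\nabla u\|_{L^2}^{q-2}M$ with $M=\int|x|^{-\alpha}|u|^2\,dx$, a remainder comparable to $\sum_k 2^{-k(2-\alpha)(q/2-1)}m_k^{q/2}$. Your proposal is that this ``is absorbed by the weighted $L^2$ piece'', but the only obvious bound is $\le C_\Omega M^{q/2}$, and $M^{q/2}\le C\|\nabla u\|_{L^2}^{q-2}M$ would force $M\le C\|\nabla u\|_{L^2}^2$, which is a Hardy/Poincar\'e inequality that fails in two dimensions on the unbounded set $\Omega$: test with a smoothed indicator of $\{|x|<R\}\cap\Omega$ to see $M\sim R^{2-\alpha}$ while $\|\nabla u\|_{L^2}^2\sim R$. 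So the mechanism you describe for closing the estimate does not work as stated; something sharper than a crude $\ell^\infty$ bound on the $m_k$ would be needed, and you have not supplied it.

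The paper sidesteps all of this with a single change of variables. Set $\Phi(x)=|x|^{-\alpha/2}x$. In dimension two the Jacobian of $\Phi$ is a constant multiple of $|x|^{-\alpha}$, so for $v:=u\circ\Phi^{-1}$ one has
\[
\int_{\Omega}|u|^p|x|^{-\alpha}\,dx
=\frac{2-\alpha}{2}\int_{\Phi(\Omega)}|v|^p\,dy
\quad(p=2\text{ or }q),
\qquad
\|\nabla v\|_{L^2(\Phi(\Omega))}^2\le \frac{2}{2-\alpha}\,\|\nabla u\|_{L^2(\Omega)}^2.
\]
The weights disappear entirely, and the standard two--dimensional Gagliardo--Nirenberg inequality
$\|v\|_{L^q}\le C\|\nabla v\|_{L^2}^{1-2/q}\|v\|_{L^2}^{2/q}$
yields the claim immediately, with no localization and no lower--order remainders. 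This exploits a special two--dimensional feature (the Jacobian of the power map is exactly the weight) that your dyadic scheme does not see.
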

\begin{proof}
It suffices to show respective inequalities for $u\in D(\mathfrak{a})$. 
First we prove the case $N\geq 3$. 
Let $u\in D(\mathfrak{a})$. Then noting that 
\[
   q_*=2^*\Big(1-\frac{\alpha}{2}\Big)+\alpha, 
\]
we see from the H\"older inequality that 
\begin{align*}
\|u\|_{L^{q_*}_{d\mu}}^{q_*}
&\leq c_0^{-1}
\int_{\Omega}
|u(x)|^{q_*}|x|^{-\alpha}\,dx
= c_0^{-1}
\int_{\Omega}
\Big(|u(x)|^{2^*}\Big)^{1-\frac{\alpha}{2}}
\Big(|u(x)|^{2}|x|^{-2}\Big)^{\frac{\alpha}{2}}
\,dx
\\
&\leq c_0^{-1}
\Big(\int_{\Omega}
|u(x)|^{2^*}
\,dx\Big)^{1-\frac{\alpha}{2}}
\Big(
\int_{\Omega}
|u(x)|^{2}|x|^{-2}
\,dx
\Big)^{\frac{\alpha}{2}}.
\end{align*}
Using Gagliardo-Nirenberg and Hardy inequalities, we have 
\[
\|u\|_{L^{q_*}_{d\mu}}^{q_*}\leq 
c_0^{-1}C_{GN}^{2^*\theta_1}C_H^{2\theta_2}
\|\nabla u\|_{L^2}^{2^*(1-\frac{\alpha}{2})}
\|\nabla u\|_{L^2}^{\alpha}
=
C'
\|\nabla u\|_{L^2}^{q_*}.
\]
If $N=2$, then set $\Phi(x)=|x|^{-\frac{\alpha}{2}}x$ for $x\in \Omega$. 
Then we see by change of variables that 
\begin{align*}
\|u\|_{L^q_{d\mu}}^q
\leq 
c_0^{-1}
\int_{\Omega}|u(x)|^{q}|x|^{-\alpha}\,dx
=
\frac{2}{(2-\alpha)c_0}\int_{\Phi(\Omega)}\Big|u(\Phi^{-1}(y))\Big|^{q}\,dy.
\end{align*}
Gagliardo-Nirenberg inequality for $v:=u\circ\Phi^{-1}$ implies
\begin{align*}
\|v\|_{L^q(\Phi(\Omega))}\leq C\|\nabla v\|_{L^2(\Phi(\Omega))}^{1-\frac{2}{q}}\|v\|_{L^2(\Phi(\Omega))}^{\frac{2}{q}}.
\end{align*}
Noting that $\Phi^{-1}(y)=|y|^{\frac{\alpha}{2-\alpha}}y$, we have 
\begin{align*}
\int_{\Phi(\Omega)}|\nabla v(y)|^{2}\,dy
&=
\int_{\Phi(\Omega)}|D^*\Phi(y)\nabla u(\Phi^{-1}(y))|^{2}\,dy
\\
&\leq 
\left(\frac{2}{2-\alpha}\right)^2\int_{\Phi(\Omega)}
|\nabla u(\Phi^{-1}(y))|^{2}|y|^{\frac{2\alpha}{2-\alpha}}\,dy
\\
&=
\left(\frac{2}{2-\alpha}\right)^2\int_{\Phi(\Omega)}|\nabla u(\Phi^{-1}(y))|^{2}|\Phi^{-1}(y)|^{\alpha}\,dy. 
\end{align*}
Using change of variables again yields
\begin{align*}
\|\nabla v\|_{L^2(\Phi(\Omega))}^2
&\leq
\frac{2}{2-\alpha}\int_{\Omega}|\nabla u(x)|^{2}\,dx. 
\end{align*}
Combining the inequalities above,  we obtain the desired inequality.
\end{proof}

\begin{proposition}
\label{embedding2}
Let $e^{tL_*}$ be given in Lemma \ref{L*}. 
For every $f\in L^2_{d\mu}$, we have
\begin{equation}
\label{2-infty}
	\|e^{tL_*}f\|_{L^\infty}\leq Ct^{-\frac{N-\alpha}{2(2-\alpha)}}\|f\|_{L^2_{d\mu}}.
\end{equation}
Moreover, for every $f\in L^1_{d\mu}\cap L^2_{d\mu}$, we have
\begin{equation}
\label{1-2}
	\|e^{tL_*}f\|_{L^2_{d\mu}}\leq Ct^{-\frac{N-\alpha}{2(2-\alpha)}}\|f\|_{L^1_{d\mu}}
\end{equation}
and
\begin{align}
\label{1-3}
	\|L_{*} e^{tL_{\ast}}f\|_{L^2_{d\mu}}
		\leq Ct^{-\frac{N-\alpha}{2(2-\alpha)}-1}\|f\|_{L^1_{d\mu}}.
\end{align}
\end{proposition}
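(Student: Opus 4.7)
My plan is to first prove \eqref{1-2} via a Nash-type argument, then deduce \eqref{2-infty} by a self-adjoint duality argument, and finally \eqref{1-3} from the analyticity of the semigroup combined with \eqref{1-2}.

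The first step converts the weighted Sobolev embeddings of Lemma \ref{embedding} into a Nash-type inequality
\[
   \|u\|_{L^2_{d\mu}}^{2+4/n}
\leq
   C\,\mathfrak{a}_*(u,u)\,\|u\|_{L^1_{d\mu}}^{4/n},
   \qquad u\in D(\mathfrak{a}_*),
\]
with effective dimension $n:=\frac{2(N-\alpha)}{2-\alpha}$, so that $n/4=\frac{N-\alpha}{2(2-\alpha)}$. For $N\geq 3$, this is obtained by interpolating $\|u\|_{L^2_{d\mu}}$ between $\|u\|_{L^1_{d\mu}}$ and $\|u\|_{L^{q_*}_{d\mu}}$ via H\"older, substituting the embedding estimate $\|u\|_{L^{q_*}_{d\mu}}\leq C\,\mathfrak{a}_*(u,u)^{1/2}$, and raising to a suitable power. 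For $N=2$, the same strategy applied to the $L^q_{d\mu}$-estimate (choosing $q$, interpolating, and algebraically eliminating the $\|u\|_{L^2_{d\mu}}^{2/q}$ factor) yields the same inequality with $n=2$, which agrees with $\frac{N-\alpha}{2(2-\alpha)}=\frac{1}{2}$ when $N=2$.

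The second step is the classical Nash argument. By Lemma \ref{submarkov} and self-adjointness of $L_*$, the semigroup $e^{tL_*}$ is also a contraction on $L^1_{d\mu}$ by duality, so $\|e^{tL_*}f\|_{L^1_{d\mu}}\leq \|f\|_{L^1_{d\mu}}$. Setting $\phi(t):=\|e^{tL_*}f\|_{L^2_{d\mu}}^2$ and combining $\phi'(t)=-2\mathfrak{a}_*(e^{tL_*}f,e^{tL_*}f)$ with the Nash inequality yields the ODE $\phi'(t)\leq -C\|f\|_{L^1_{d\mu}}^{-4/n}\phi(t)^{1+2/n}$, whose integration delivers \eqref{1-2}. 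Then \eqref{2-infty} follows from \eqref{1-2} by self-adjointness: for $f\in L^2_{d\mu}$ and $g\in L^1_{d\mu}\cap L^2_{d\mu}$ one has
\[
   \left|\int_{\Omega}(e^{tL_*}f)\,g\,d\mu\right|
=
   \left|\int_{\Omega}f\,(e^{tL_*}g)\,d\mu\right|
\leq
   \|f\|_{L^2_{d\mu}}\|e^{tL_*}g\|_{L^2_{d\mu}}
\leq
   Ct^{-\frac{N-\alpha}{2(2-\alpha)}}\|f\|_{L^2_{d\mu}}\|g\|_{L^1_{d\mu}},
\]
and the duality $(L^\infty,L^1_{d\mu})$ delivers the pointwise sup estimate. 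Finally, \eqref{1-3} follows by factoring $L_*e^{tL_*}=(L_*e^{(t/2)L_*})\circ e^{(t/2)L_*}$ and combining the analytic-semigroup bound $\|L_*e^{(t/2)L_*}\|_{L^2_{d\mu}\to L^2_{d\mu}}\leq 2/t$ from Lemma \ref{L*} with \eqref{1-2} applied at time $t/2$.

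The main technical obstacle I expect is in the $N=2$ case of the first step: the embedding in Lemma \ref{embedding} carries an $\|u\|_{L^2_{d\mu}}^{2/q}$ factor on the right-hand side, so one must algebraically reshape it into a genuine Nash inequality and verify that the resulting exponents combine to give precisely $n/4=\frac{1}{2}$, independently of the auxiliary exponent $q$.
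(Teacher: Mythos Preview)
Your argument is correct, but the route differs from the paper's. The paper proves \eqref{2-infty} first: from Lemma~\ref{embedding} it obtains the one-step smoothing estimate $\|e^{tL_*}f\|_{L^{q_*}_{d\mu}}\le C\,t^{-1/2}\|f\|_{L^2_{d\mu}}$ (via $\mathfrak{a}_*(e^{tL_*}f,e^{tL_*}f)\le t^{-1}\|f\|_{L^2_{d\mu}}^2$), and then invokes an extrapolation lemma (\cite[Lemma 6.1]{Ouhabaz}) together with the $L^\infty$-contractivity from Lemma~\ref{submarkov} to iterate this $L^2\to L^{q_*}$ bound up to $L^2\to L^\infty$. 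Only afterward does it pass by self-adjoint duality to \eqref{1-2}, and finally obtains \eqref{1-3} by the same factorisation you use. In other words, the paper goes $L^2\!\to\! L^\infty$ then dualises, whereas you go $L^1\!\to\! L^2$ then dualise. Your Nash-inequality approach is more self-contained---the ODE integration is elementary and avoids the black-box extrapolation lemma---at the cost of a bit of algebra to reshape the embeddings of Lemma~\ref{embedding} into Nash form (and, as you note, of checking the $N=2$ exponents, which do indeed collapse to $n=2$ independently of $q$). The paper's route is shorter on the page but relies on the cited extrapolation machinery. Both methods exploit the same underlying ingredients (the embedding, sub-Markovianity, self-adjointness, analyticity), and the equivalence between Nash-type and Sobolev-type inputs for ultracontractivity is classical, so neither approach is strictly more general here.
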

\begin{proof}
If $N\geq 3$, then by Lemma \ref{embedding}, we have
for every $t>0$ and $f\in L^2_{d\mu}$, 
\begin{align*}
   \|e^{tL_*}f\|_{L^{q_*}_{d\mu}}
&\leq 
   C_{N,\alpha}\mathfrak{a}_*(e^{tL_*}f,e^{tL_*}f)^{\frac{1}{2}}
\\
&=
   C_{N,\alpha}(-L_*e^{tL_*}f,e^{tL_*}f)_{L^2_{d\mu}}^{\frac{1}{2}}
\\
&\leq 
   C_{N,\alpha}\|L_*e^{tL_*}f\|_{L^2_{d\mu}}^{\frac{1}{2}}\|e^{tL_*}f\|_{L^2_{d\mu}}^{\frac{1}{2}}
\\
&\leq 
   C_{N,\alpha}t^{-\frac{1}{2}}\|f\|_{L^2_{d\mu}}.
\end{align*}
Therefore noting Lemmas \ref{submarkov} and 
applying \cite[Lemma 6.1]{Ouhabaz} with $(\alpha,r)=(\frac{1}{2},q_*)$, 
we obtain \eqref{2-infty}. Moreover, 
for every $f\in L^1_{d\mu}\cap L^2_{d\mu}$ and 
$g\in L^2_{d\mu}$, 
\begin{align*}
\Big|
  (e^{tL_*}f,g)_{L^2_{d\mu}}
\Big|
=
\Big|
  \int_{\Omega}f\big(e^{tL_*}g\big) d\mu
\Big|
\leq 
\|f\|_{L^1_{d\mu}}\|e^{tL_*}g\|_{L^\infty}
\leq 
Ct^{-\frac{N-\alpha}{2(2-\alpha)}}\|f\|_{L^1_{d\mu}}\|g\|_{L^2_{d\mu}}.
\end{align*}
Therefore \eqref{1-2} is proved.
The estimate \eqref{1-3} is easily proved by noting
\[
	\| L_{\ast} e^{tL_{\ast}}f \|_{L^2_{d\mu}}
	= \left\| L_{\ast} e^{\frac{t}{2}L_{\ast}} \left( e^{\frac{t}{2}L_{\ast}} f \right) \right\|_{L^2_{d\mu}}
	\leq \frac{2}{t} \| e^{\frac{t}{2}L_{\ast}} f  \|_{L^2_{d\mu}}
\]
and using \eqref{1-2}.
We can also verify the case of $N=2$ by a similar computation. 
\end{proof}

Finally, we discuss about the optimality of the estimate \eqref{1-2}.
\begin{proposition}
\label{prop_opt}
If $N\geq 2$ and $a(x)=|x|^{-\alpha}$, then there exist 
a function $f\in C_c^\infty(\Omega)$ and constants $\ep>0$ and $t_0>0$ such that 
\[
\|e^{tL_*}f\|_{L^2_{d\mu}}\geq \ep t^{-\frac{N-\alpha}{2(2-\alpha)}}, \quad t\geq t_0.
\]
In other words, the decay rate $t^{-\frac{N-\alpha}{2(2-\alpha)}}$ of $e^{tL_*}$ 
is optimal 
for initial data in $C_c^\infty(\Omega)$.
\end{proposition}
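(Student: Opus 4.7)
The plan is to take $f\in C_c^\infty(\Omega)$ nonnegative and nontrivial, set $u(t):=e^{tL_*}f$ (so $u(t)\ge 0$ by Lemma~\ref{submarkov}), and combine a conserved weighted integral with a spreading estimate in order to bound $\|u(t)\|_{L^2_{d\mu}}$ from below by Cauchy--Schwarz. I describe the argument for $N\ge 3$, and comment on $N=2$ at the end.

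\textbf{Step 1: Conservation against a harmonic weight.} For $N\ge 3$, let $h$ be the bounded harmonic function on $\Omega$ determined by $h|_{\partial\Omega}=0$ and $h(x)\to 1$ as $|x|\to\infty$; such $h$ exists (namely $h=1-w$ where $w$ is the capacitary potential of $\R^N\setminus\Omega$), and $0<h\le 1$ in $\Omega$ by the strong maximum principle. Formally,
\[
\frac{d}{dt}\int_{\Omega} u\,h\,d\mu
= \int_{\Omega} h\,\Delta u\,dx
= \int_{\Omega} u\,\Delta h\,dx + \int_{\partial\Omega}\bigl(h\,\pa_\nu u - u\,\pa_\nu h\bigr)\,dS = 0,
\]
since $\Delta h=0$ in $\Omega$ and $u=h=0$ on $\partial\Omega$; the flux at infinity vanishes in the limit of a cutoff argument using the $L^1_{d\mu}$-boundedness of $u$ and the smoothing estimate \eqref{2-infty}. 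Hence $\int u(t)\,h\,d\mu\equiv c_0:=\int f\,h\,d\mu>0$.

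\textbf{Step 2: Moment bound.} Using $\Delta|x|^{2-\alpha}=(2-\alpha)(N-\alpha)|x|^{-\alpha}$ together with the observation that $\frac{d}{dt}\int u\,d\mu=\int_{\partial\Omega}\pa_\nu u\,dS\le 0$ (because $u\ge 0$ and $u|_{\partial\Omega}=0$ force $\pa_\nu u\le 0$), an analogous integration by parts yields
\[
\frac{d}{dt}\int_{\Omega} u\,|x|^{2-\alpha}\,d\mu
\le (2-\alpha)(N-\alpha)\int_{\Omega} u\,d\mu
\le C\|f\|_{L^1_{d\mu}},
\]
whence $\int u(t)\,|x|^{2-\alpha}\,d\mu\le C(1+t)$.

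\textbf{Step 3: Localization and Cauchy--Schwarz.} Since $h\le 1$, for any $R>0$,
\[
\int_{\Omega\setminus B_R} u\,h\,d\mu
\le R^{-(2-\alpha)}\int_{\Omega} u\,|x|^{2-\alpha}\,d\mu
\le C(1+t)R^{-(2-\alpha)}.
\]
Choosing $R=[A(1+t)]^{1/(2-\alpha)}$ with $A$ large enough that this tail is at most $c_0/2$ gives $\int_{\Omega\cap B_R}u\,d\mu\ge\int_{\Omega\cap B_R}u\,h\,d\mu\ge c_0/2$. Cauchy--Schwarz against $\mathbf{1}_{B_R}$ in $L^2_{d\mu}$ then produces
\[
c_0/2 \le \|u(t)\|_{L^2_{d\mu}}\,\mu(\Omega\cap B_R)^{1/2}\le C\|u(t)\|_{L^2_{d\mu}}\,R^{(N-\alpha)/2},
\]
because $\mu(B_R)\le CR^{N-\alpha}$. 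Substituting the choice of $R$ delivers $\|u(t)\|_{L^2_{d\mu}}\ge\varepsilon(1+t)^{-(N-\alpha)/[2(2-\alpha)]}$ for $t\ge t_0$.

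\textbf{Main obstacle and the case $N=2$.} The key technical point is rigorously justifying the two integrations by parts at infinity, which requires adequate spatial decay of $u(t)$ and $\nabla u(t)$; this is supplied by combining the smoothing estimate \eqref{2-infty}, the $L^1_{d\mu}$-bound on $u(t)$, and truncation by a smooth radial cutoff. For $N=2$ no bounded harmonic function with the above properties exists (the natural candidate grows like $\log|x|$); one then either substitutes a slowly-growing positive harmonic weight and sharpens the tail estimate accordingly, or performs the radial change of variable $s=r^{(2-\alpha)/2}$ that converts $L$ on radial profiles into an operator comparable to the Laplacian in the effective ``dimension'' $M=\frac{2(N-\alpha)}{2-\alpha}$, and invokes a direct lower bound for the resulting heat equation to close the argument.
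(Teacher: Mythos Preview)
Your approach for $N\ge 3$ is sound and genuinely different from the paper's. The paper constructs an explicit self-similar solution
\[
G(x,t)=t^{-\frac{N-\alpha}{2-\alpha}}\Big[1-t^{\frac{N-2}{2-\alpha}}|x|^{2-N}\Big]\exp\Big[-\frac{|x|^{2-\alpha}}{(2-\alpha)^2t}\Big],
\]
checks that $G$ solves the equation in $\R^N\setminus\{0\}$ and that $G\le 0$ on $\{|x|^{2-\alpha}\le t\}$, so that $G_+$ vanishes on a ball containing $\partial\Omega$; positivity of the semigroup then gives $e^{tL_*}g\ge G_+(\cdot,t+t_R)$ whenever $g\ge G_+(\cdot,t_R)$, and a scaling computation of $\|G_+(\cdot,t)\|_{L^2_{d\mu}}$ produces the sharp lower bound. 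A final cutoff together with the $L^1_{d\mu}$--$L^2_{d\mu}$ estimate \eqref{1-2} passes from $g$ to a compactly supported $f$. Your moment-plus-Cauchy--Schwarz argument is more robust in that it needs no explicit barrier, and the integrations by parts in Steps~1 and~2 can indeed be made rigorous: in Step~1 test against $h\zeta_n\in D(\mathfrak{a}_*)$ with the logarithmic cutoff $\zeta_n$ of Lemma~\ref{domain.form} and use $|\nabla h|\le C|x|^{1-N}$; in Step~2 the mass bound follows directly from $L^1_{d\mu}$-contractivity (the dual of Lemma~\ref{submarkov}), and the cutoff errors in the moment computation are dominated by $C\|u(s)\|_{L^1_{d\mu}}$ uniformly in $n$, so the inequality survives the limit.

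The genuine gap is $N=2$. Neither of your proposed fixes recovers the sharp rate $t^{-1/2}$. If you replace $h$ by the positive harmonic function vanishing on $\partial\Omega$, then $h(x)$ grows like $\log|x|$, and in Step~3 the Cauchy--Schwarz bound against $h\chi_{B_R}$ now involves $\|h\chi_{B_R}\|_{L^2_{d\mu}}\sim R^{(2-\alpha)/2}\log R$; combined with the tail control this forces $R^{2-\alpha}\sim t\log t$ and leaves only a lower bound of order $t^{-1/2}(\log t)^{-\gamma}$ for some $\gamma>0$. The radial change of variables $s=r^{(2-\alpha)/2}$ reduces the problem only for radial data on a radially symmetric domain, but Proposition~\ref{prop_opt} makes no symmetry assumption on $\Omega$, so that reduction is unavailable. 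The paper handles $N=2$ by replacing $G$ with
\[
\widetilde G(x,t)=t^{-1}\log\Big(\frac{|x|^{2-\alpha}}{t}\Big)\exp\Big[-\frac{|x|^{2-\alpha}}{(2-\alpha)^2t}\Big],
\]
which again solves the equation, is nonpositive on $\{|x|^{2-\alpha}\le t\}\supset\partial\Omega$, and satisfies $\|\widetilde G_+(\cdot,t)\|_{L^2_{d\mu}}\ge ct^{-1/2}$; the comparison argument then runs verbatim.
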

\begin{proof}
First we consider the case $N\geq 3$. 
Let $R$ satisfy $K\subset B(0,R/2)$. Define the function
\[
G(x,t)=
t^{-\frac{N-\alpha}{2-\alpha}}
\left[1-t^{\frac{N-2}{2-\alpha}}|x|^{2-N}\right]
\exp\Big[-\frac{|x|^{2-\alpha}}{(2-\alpha)^2t}\Big], 
\quad x\in \Omega, t> 0
\]
and $G_+(x,t):=\big(G(x,t)\big)_+$.
By \cite[Lemma 5.5]{IMSS} we can see that 
\[
\frac{\pa G}{\pa t}=|x|^{\alpha}\Delta G 
\quad\text{in}\ \Omega=\R^N\setminus \{0\}, 
\]
and 
\[
\begin{cases}
G(x,t)\geq 0&\text{if}\ |x|^{2-\alpha}\geq t,
\\
G(x,t)\leq 0&\text{if}\ |x|^{2-\alpha}\leq t.
\end{cases}
\]
If $g\in L^2_{d\mu}$ satisfies $g\geq G_+(x,t_R)$ with $t_R=R^{2-\alpha}$, 
then since $e^{tL_*}$ is preserves positivity, 
we see that for every $x\in B(0,R)\cap \Omega$, 
\begin{align*}
G(x,t+t_R)-e^{tL_*}g
&\leq G(x,t+t_R)
\leq 0
\end{align*}
and then
\begin{align*}
&
\frac{1}{2}\,\frac{d}{dt}\Big\|\Big(G(x,t+t_R)-e^{tL_*}g\Big)_+\Big\|_{L^2_{d\mu}}^2
\\
&=
\int_{\Omega}
\Big(\frac{\pa G}{\pa t}(x,t+t_R)-\frac{d}{dt}e^{tL_*}g\Big)
\Big(G(x,t+t_R)-e^{tL}g\Big)_+a(x)\,dx
\\
&=
\int_{\R^N\setminus B(0,R)}
\Delta\Big(G(x,t+t_R)-e^{tL_*}g\Big)
\Big(G(x,t+t_R)-e^{tL_*}g\Big)_+\,dx
\\
&=
-\int_{\R^N\setminus B(0,R)}
\Big|\nabla\Big(G(x,t+t_R)-e^{tL}g\Big)_+\Big|^2\,dx
\\
&\leq 0.
\end{align*}
Hence we have 
$e^{tL_{\ast}}g\geq G_+(x,t+t_R)$ and then
\begin{align*}
\|e^{tL_*}g\|_{L^2_{d\mu}}^2
&\geq \int_{\Omega}G_+(x,t+t_R)^2a(x)\,dx
\\
&=
\int_{\R^N\setminus B(0,(t+t_R)^{\frac{1}{2-\alpha}})}G_+(x,t+t_R)^2|x|^{-\alpha}\,dx
\\
&=
(t+t_R)^{-\frac{N-\alpha}{2-\alpha}}\int_{\R^N\setminus B(0,1)}(1-|y|^{2-N})^2\exp\Big[-\frac{|y|^{2-\alpha}}{(2-\alpha)^2}\Big]\,dy
\\
&=
\widetilde{C}(t+t_R)^{-\frac{N-\alpha}{2-\alpha}}.
\end{align*}
To conclude the proof, 
we choose a cut-off function $\widetilde{\eta}(x)\in C_c^\infty(\R^N)$ such that 
\[
\|(1-\widetilde{\eta})g\|_{L^1_{d\mu}}\leq 3^{-\frac{N-\alpha}{2(2-\alpha)}}C^{-1}\widetilde{C},
\]
where $C$ is the constant given by $L^2_{d\mu}$-$L^1_{d\mu}$ estimate. 
Then 
\begin{align*}
\|e^{tL_*}(1-\widetilde{\eta})g\|_{L^2_{d\mu}}
\leq 
Ct^{-\frac{N-\alpha}{2(2-\alpha)}}\|(1-\widetilde{\eta})g\|_{L^1_{d\mu}}
\leq 
\widetilde{C}(3t)^{-\frac{N-\alpha}{2(2-\alpha)}}
\end{align*}
and therefore for every $t\geq t_R$, 
\begin{align*}
\|e^{tL_*}(\widetilde{\eta}g)\|_{L^2_{d\mu}}
&\geq 
\|e^{tL_*}g\|_{L^2_{d\mu}}-
\|e^{tL_*}(1-\widetilde{\eta})g\|_{L^2_{d\mu}}
\\
&\geq 
\widetilde{C}(t+t_R)^{-\frac{N-\alpha}{2(2-\alpha)}}
-\widetilde{C}(3t)^{-\frac{N-\alpha}{2(2-\alpha)}}
\\
&\geq 
\widetilde{C}\left(
2^{-\frac{N-\alpha}{2(2-\alpha)}}
-
3^{-\frac{N-\alpha}{2(2-\alpha)}}
\right)
t^{-\frac{N-\alpha}{2(2-\alpha)}}.
\end{align*}
Since $\widetilde{\eta}g\in C^\infty(\Omega)$ is compactly supported, the proof is completed if $N\geq 3$.

If $N=2$, then replacing $G$ with 
\[
\widetilde{G}(x,t)
=t^{-1}\log\Big(\frac{|x|^{2-\alpha}}{t}\Big)
       \exp\Big[-\frac{|x|^{2-\alpha}}{(2-\alpha)^2t}\Big], 
\quad x\in \Omega, t> 0,
\]
we can verify the same conclusion. Hence the proof is completed. 
\end{proof}

%
%
%

\section{Weighted energy estimates for damped wave equation}
In this section, we prove almost sharp estimates for
time-derivatives of the solution to the damped wave equation \eqref{dw}.

In this section, we assume the coefficient of the damping 
$a(x)$
is radially symmetric function on the whole space $\mathbb{R}^N$
and satisfies
$a\in C^2(\mathbb{R}^N)$
and
\begin{align}
\label{a0p}
\tag{a0$'$}
	a_1 (1+|x|)^{-\alpha} \le a(x) \le a_2 (1+|x|)^{-\alpha}
\end{align}
with some
$a_1, a_2 >0$
and
$\alpha \in [0,1)$.
Then, by Todorova and Yordanov \cite[Proposition 1.3]{ToYo09},
there exists a solution to the Poisson equation
\begin{align}
\label{poi}
	\Delta A_0 (x) = a(x)\quad \mbox{in}\ \mathbb{R}^N
\end{align}
satisfying
\begin{align}
\tag{a1}\label{a1}
	&A_1 (1+|x|)^{2-\alpha} \le A_0(x) \le A_2 (1+|x|)^{2-\alpha},\\
\tag{a2}\label{a2}
	&
	h_a:=
	\lim_{R\to \infty}
	\left[
  		\sup_{x\in \R^N\setminus B_R}\left(\frac{|\nabla A_0(x)|^2}{a(x)A_0(x)}\right)
	\right]<\infty
\end{align}
with some $A_1, A_2 >0$.

\begin{lemma}
\label{lem_a}
Let $a$ and $A_0$ satisfy \eqref{a0p}, \eqref{poi}, \eqref{a1} and \eqref{a2}. 
Then for every $\ep>0$ there exists $c_0 \geq 0$ such that 
$A(x)=A_0(x) + c_0 $ satisfies
\[
	\frac{|\nabla A(x)|^2}{a(x)A(x)} \leq  h_{\alpha}+\ep
\]
for any
$x\in \Omega$.
\end{lemma}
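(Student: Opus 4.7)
The plan is to split $\Omega$ into a far region where \eqref{a2} already gives the bound on the original ratio $|\nabla A_0|^2/(aA_0)$, and a compact near region where one controls the ratio by making the denominator large via a sufficiently big constant $c_0$.

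First, by the definition of $h_a$ in \eqref{a2}, for any $\varepsilon>0$ there exists $R>0$ (which we may take larger than the inner radius of the hole, so $\Omega\setminus B_R$ is nonempty) such that
\[
   \sup_{x\in\Omega\setminus B_R}\frac{|\nabla A_0(x)|^2}{a(x)A_0(x)}\leq h_a+\varepsilon.
\]
For any $c_0\geq 0$ the inequality $A_0(x)+c_0\geq A_0(x)$ together with positivity of $A_0$ (guaranteed by \eqref{a1}) then yields
\[
   \frac{|\nabla A(x)|^2}{a(x)A(x)}
   =\frac{|\nabla A_0(x)|^2}{a(x)(A_0(x)+c_0)}
   \leq \frac{|\nabla A_0(x)|^2}{a(x)A_0(x)}
   \leq h_a+\varepsilon
   \quad\text{on }\Omega\setminus B_R.
\]

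For the near region, recall that $0\notin\overline{\Omega}$, so there exists $\rho>0$ with $\Omega\subset\{|x|\geq\rho\}$. Hence $K:=\overline{\Omega}\cap\overline{B_R}$ is compact. Since $a\in C^2(\R^N)$ is strictly positive by \eqref{a0p} and $A_0\in C^2(\R^N)$ by elliptic regularity applied to \eqref{poi}, the continuous function $|\nabla A_0|^2/a$ attains a finite maximum $M$ on $K$. Choose
\[
   c_0:=\frac{M}{h_a+\varepsilon}.
\]
Then on $K$,
\[
   \frac{|\nabla A_0(x)|^2}{a(x)(A_0(x)+c_0)}
   \leq \frac{|\nabla A_0(x)|^2}{a(x)\,c_0}
   \leq \frac{M}{c_0}
   =h_a+\varepsilon.
\]
Combining the two estimates gives the claimed bound on all of $\Omega$.

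The argument is essentially routine; the only point demanding attention is verifying that the near region is truly compact so that $|\nabla A_0|^2/a$ is bounded there, which rests on the hypothesis $0\notin\overline{\Omega}$ that forces $\Omega$ away from the singularity of $|x|^{-\alpha}$. Everything else is a two-region splitting matched by the choice of $c_0$.
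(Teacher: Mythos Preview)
Your proof is correct and follows essentially the same two-region splitting as the paper: use \eqref{a2} to handle the far region (noting that adding $c_0\ge 0$ only decreases the ratio), and on the compact near region bound $|\nabla A_0|^2/a$ uniformly and take $c_0$ large enough. The paper is slightly terser, merely writing the inequality with $\|\nabla A_0\|_{L^\infty}^2/\inf a$ and saying ``$c_0$ sufficiently large,'' whereas you spell out the compactness argument and give the explicit choice $c_0=M/(h_a+\varepsilon)$.
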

\begin{proof}
For any $\ep > 0$,
there exists a constant $R>0$ such that
\[
	\frac{|\nabla A_0(x)|^2}{a(x)A_0(x)} \leq  h_{\alpha}+\ep
\]
holds for any
$|x| > R$.
For $x\in \Omega \cap \{ |x| \le R \}$,
taking a constant
$c_0>0$
sufficient large, we have
\[
	\frac{|\nabla (A_0(x)+c_0)|^2}{a(x)(A_0(x)+c_0)} 
	\leq 
	\frac{\|\nabla A_0\|_{L^\infty(\Omega \cap \{ |x| \le R \})}^2}{\inf_{x\in \Omega \cap \{ |x| \le R \}}a(x)}c_0^{-1}
	\leq  h_{\alpha}+\ep.
\]
The proof is completed.
\end{proof}

Let us recall the finite speed propagation property of the wave equation
(see \cite{Ikbook}).
\begin{lemma}[Finite speed of propagation]
\label{fp}
Let $u$ be the solution of \eqref{dw} with the initial data
$(u_0,u_1)$
satisfying
${\rm supp}\,(u_0,u_1) \subset \{ x \in \Omega ; |x| \le R_0 \}$.
Then, one has
\[
	{\rm supp}\,u(\cdot,t)\subset \{x\in \Omega\;;\;|x|\leq R_0+t\}
\]
and therefore
$|x|/(t_0+t) \leq 1$
for $t_0\geq R_0, t \ge 0$
and
$x\in {\rm supp}\,u(\cdot, t)$.
\end{lemma}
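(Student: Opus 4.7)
The plan is to apply the classical backward light-cone energy estimate, adapted in the obvious way to accommodate the positive damping term $a(x)u_t$ and the Dirichlet boundary. Because $a\geq 0$, the damping can only help: one should expect exactly the same propagation speed $1$ as for the free wave equation.

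Fix $(x_0,t_0)$ with $|x_0|>R_0+t_0$ and consider the truncated backward cone
\[
K(x_0,t_0)=\{(x,t)\;:\;0\leq t\leq t_0,\ |x-x_0|\leq t_0-t\}\cap(\Omega\times[0,t_0]).
\]
Define the local energy
\[
E(t):=\frac{1}{2}\int_{\{x\in\Omega\,:\,|x-x_0|<t_0-t\}}\bigl(|u_t(x,t)|^2+|\nabla u(x,t)|^2\bigr)\,dx.
\]
Multiplying \eqref{dw} by $u_t$ gives the pointwise identity
\[
\tfrac{1}{2}\partial_t\bigl(|u_t|^2+|\nabla u|^2\bigr)=\nabla\cdot(u_t\nabla u)-a(x)|u_t|^2,
\]
and I would differentiate $E(t)$ using the Reynolds transport formula to pick up a boundary flux on the lateral surface $|x-x_0|=t_0-t$, a boundary flux on $\partial\Omega\cap\{|x-x_0|<t_0-t\}$, and the dissipation term.

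The three resulting terms are all controlled in a standard way. On the lateral surface, the Cauchy--Schwarz inequality $|u_t\,\nabla u\cdot\nu|\leq\tfrac{1}{2}(|u_t|^2+|\nabla u|^2)$ exactly cancels the contribution from the shrinking domain, giving a nonpositive contribution. On $\partial\Omega$, the Dirichlet condition $u(\cdot,t)\equiv 0$ forces $u_t=0$ tangentially, so the flux $u_t\,\nabla u\cdot\nu_{\partial\Omega}$ vanishes identically. The damping term contributes $-\int a(x)|u_t|^2\,dx\leq 0$. Hence $E'(t)\leq 0$, so $E(t)\leq E(0)$ for $t\in[0,t_0]$.

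Now the assumption $|x_0|>R_0+t_0$ implies that every $x$ with $|x-x_0|\leq t_0$ satisfies $|x|>R_0$, so $(u_0,u_1)$ vanishes on the base of the cone and $E(0)=0$. Therefore $E(t)\equiv 0$ on $[0,t_0]$, which forces $u_t=\nabla u=0$ in $K(x_0,t_0)$; combined with $u(\cdot,0)=0$ there, we conclude $u(x_0,t_0)=0$. This proves ${\rm supp}\,u(\cdot,t)\subset\{|x|\leq R_0+t\}$, and the second assertion is an immediate consequence. The main (mild) technical point is justifying the differentiation of $E(t)$ and the divergence-theorem manipulations at the regularity given by Ikawa's existence theorem; since the initial data lie in $H^3\times H^2$ with the compatibility condition, $u\in C^2([0,\infty);H^1)$ and the formal computation is rigorous, with the boundary term on $\partial\Omega$ handled by the vanishing trace of $u_t$.
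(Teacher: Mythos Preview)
Your argument is the standard backward-cone energy proof of finite propagation speed, correctly adapted to include the nonnegative damping term and the Dirichlet boundary; it is sound. The paper does not actually supply a proof of this lemma but simply recalls the result with a citation to Ikawa's textbook, so your write-up in effect fills in the details that the paper omits, by precisely the classical method one would find in the cited reference.
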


We define the weight function
$\Phi$.
For a constant
$\beta > 0$
and a function
$A(x)$,
we put
\begin{align*}
	\Phi_{A,\beta}(x,t) &:= \exp\left(\beta\,\frac{A(x)}{1+t}\right).
\end{align*}

The main result of this section is the following.

\begin{proposition}\label{prop_en}
Let
$a(x) \in C^{2}(\mathbb{R}^N)$
be a radially symmetric function
satisfying \eqref{a0p} and let
$A(x)$
be a solution to the Poisson equation \eqref{poi} which satisfies \eqref{a1}, \eqref{a2}.
Then, for any $\ep>0$, there exists a constant
$\beta >0$ such that the following holds:
Let
$k\ge 0$
be an integer and assume that
$a\in C^{\max(k,2)}(\mathbb{R}^N)$
and the initial data
$(u_0,u_1)$
belong to
$[H^{k+1}\cap H^1_0(\Omega)] \times [H^{k}\times H^1_0(\Omega)]$
with the compatibility condition of order
$k$
and the support condition
${\rm supp\,}(u_0,u_1) \subset \{ x \in \Omega ; |x| < R_0 \}$
for some $R_0>0$.
Then, there exist constants
$t_0>0$
and
$C=C(N,k,R_0,\ep )>0$
such that the solution
$u$
to \eqref{dw} satisfies
\begin{align}
\label{en_es}
	&\int_{\Omega} \Phi_{A,\beta}(x,t) a(x) | \pa_t^k u(x,t) |^2 dx \\
\nonumber
	&\quad \le C (t_0+ t)^{-1/h_a-2k+\ep} \| (u_0, u_1) \|_{H^{k+1} \times H^k}^2,\\
\nonumber
	&\int_{\Omega} \Phi_{A,\beta}(x,t) | \pa_t^k \nabla_x u(x,t) |^2 dx \\
\nonumber
	&\quad \le C (t_0+ t)^{-1/h_a-2k-1+\ep} \| (u_0, u_1) \|_{H^{k+1} \times H^k}^2
\end{align}
for
$t\ge 0$.
\end{proposition}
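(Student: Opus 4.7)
The plan is to prove Proposition \ref{prop_en} by induction on $k$, following the weighted energy method of Todorova--Yordanov and Nishihara adapted to the exterior-domain setting. For the base case $k=0$, I would test \eqref{dw} against $(t_0+t)^{\lambda_0}\Phi_{A,\beta}u_t$, together with a second multiplier $(t_0+t)^{\lambda_0-1}\Phi_{A,\beta}u$ for the lower-order piece, and integrate by parts. The Dirichlet boundary condition kills the boundary terms, and the identities
\[
   \pa_t \Phi_{A,\beta} = -\frac{\beta A}{(1+t)^2}\Phi_{A,\beta}, \qquad
   \nabla \Phi_{A,\beta} = \frac{\beta \nabla A}{1+t}\Phi_{A,\beta}
\]
reduce the argument to controlling the cross term $\frac{2\beta}{1+t}\int \Phi_{A,\beta} u_t\,\nabla A\cdot\nabla u\,dx$. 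Bounding it by Cauchy--Schwarz and invoking the pointwise estimate $|\nabla A|^2 \le (h_a+\ep')aA$ from Lemma \ref{lem_a}, one absorbs the resulting terms into the dissipation $\int a\Phi_{A,\beta}u_t^2\,dx$ and into the favorable $\tfrac{\beta A}{(1+t)^2}\Phi_{A,\beta}|\nabla u|^2$ arising from $-\pa_t\Phi_{A,\beta}|\nabla u|^2$. This forces $\beta < 2/(h_a+\ep')$ and brings the exponent $\lambda_0 = 1/h_a - \ep$ into play.

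The polynomial weight $(t_0+t)^{\lambda_0}$ generates a loss $\lambda_0(t_0+t)^{\lambda_0-1}\cdot\text{(energy)}$ upon differentiation in time. I would absorb it using the finite speed of propagation (Lemma \ref{fp}), which on $\mathrm{supp}\,u(\cdot, t)$ gives $a(x) \ge c(1+R_0+t)^{-\alpha}$; since $\alpha < 1$ the bound $(t_0+t)^{-\alpha} \gtrsim (t_0+t)^{-1}$ (for $t_0$ large enough) enables the absorption into the dissipation. A Hardy-type inequality on the exterior domain relates $\int \Phi_{A,\beta}u^2\,dx$ to $\int \Phi_{A,\beta}|\nabla u|^2\,dx$ and yields the bound on $\int a\Phi_{A,\beta}u^2\,dx$. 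For the induction step, $w = \pa_t^k u$ satisfies $w_{tt} - \Delta w + a w_t = 0$ with $w|_{\pa\Omega} = 0$ (by the compatibility condition of order $k$), so the same energy argument applies to $w$ with polynomial weight $\lambda_k = 1/h_a + 2k - \ep$; the additional gain of $(t_0+t)^{-2}$ per derivative reflects the parabolic scaling of the associated diffusion process and arises from iterating the base-case differential inequality on a coupled energy of the form $\sum_{j=0}^k (t_0+t)^{\lambda_j}\bigl[\int \Phi a(\pa_t^j u)^2\,dx + \int \Phi|\pa_t^j \nabla u|^2\,dx\bigr]$.

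The main obstacle I foresee is the simultaneous calibration of $\beta$, the Cauchy--Schwarz splitting parameter, the polynomial exponents $\lambda_k$, and the constants from the Hardy-type inequality, so that the differential inequality closes with precisely the stated exponent at each level of the induction. A secondary but essential difficulty is establishing the appropriate Hardy-type inequality in $\Omega$ (where $0 \notin \overline{\Omega}$, so the classical radial Hardy inequality is unavailable); this inequality is the principal new ingredient that replaces the explicit computations on $\mathbb{R}^N$ used in \cite{RaToYo09}.
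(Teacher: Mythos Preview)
Your proposal is correct and follows essentially the same route as the paper: the two multipliers $\Phi_{A,\beta}u_t$ and $\Phi_{A,\beta}u$ (the paper packages them as $E_1$ and $E_2$) with polynomial time weights, the cross term handled via Lemma~\ref{lem_a}, finite propagation speed to absorb the loss from differentiating the polynomial weight, the Hardy-type inequality (Lemma~\ref{lem_ha}, derived from $0\le\int\Phi_{A,\beta}^{-1}|\nabla(\Phi_{A,\beta}u)|^2\,dx$ together with $-\Delta\Phi_{A,\beta}+|\nabla\Phi_{A,\beta}|^2/\Phi_{A,\beta}=-\tfrac{\beta a}{1+t}\Phi_{A,\beta}$), and induction on $k$ using that $\partial_t^k u$ solves \eqref{dw}. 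One small calibration correction: in the paper the polynomial exponents on $E_1$ and $E_2$ differ by $\alpha$ (not $1$) in the first pass, and a second pass with the $E_1$-exponent raised by $1$ is then used to extract the sharper $|\nabla u|^2$ bound and the time-integrated control of $F(t;u)$ that feeds the induction.
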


In particular, as stated in Introduction,
if $a(x)$ is a radially symmetric function on the whole space
$\R^N$
and satisfies \eqref{a0} with some
$\alpha \in [0,1)$,
then there exists a solution $A(x)$ to the Poisson equation \eqref{poi}
satisfying
\begin{align}
\tag{a3}\label{a3}
	A(x) &= \frac{a_0}{(2-\alpha)(N-\alpha)}|x|^{2-\alpha} + o(|x|^{2-\alpha}),\\
\tag{a4}\label{a4}
	h_a &= \frac{2-\alpha}{N-\alpha}.
\end{align}
This was also proved in \cite[Proposition 1.3]{ToYo09}.
Therefore, from the above proposition, we obtain the following
energy estimates for time-derivatives of the solution to \eqref{dw}.

\begin{corollary}\label{cor32}
Let
$k \ge 0$
be an integer.
In addition to the assumption in Proposition \ref{prop_en},
we assume that
$a(x)$
satisfies \eqref{a0}.
Then for any
$\ep >0$,
there exist constants
$t_0>0$
and
$C=C(N,k,R,\ep)>0$
such that the solution $u$ to \eqref{dw} satisfies
\begin{align*}
	\int_{\Omega} \Phi_{A,\beta}(x,t)a(x) | \pa_t^k u(x,t) |^2 dx
	&\le C (t_0+ t)^{-\frac{N-\alpha}{2-\alpha} -2k +\ep } \| (u_0, u_1) \|_{H^{k+1} \times H^k}^2,\\
	\int_{\Omega} \Phi_{A,\beta}(x,t) | \pa_t^k \nabla_x u(x,t) |^2 dx
	&\le C (t_0+ t)^{-\frac{N-\alpha}{2-\alpha} -2k -1 +\ep } \| (u_0, u_1) \|_{H^{k+1} \times H^k}^2
\end{align*}
for
$t\ge 0$.
\end{corollary}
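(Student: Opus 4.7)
The proof will be essentially a direct substitution: Corollary \ref{cor32} is obtained from Proposition \ref{prop_en} by specializing the constant $h_a$ once we know the precise asymptotic profile of the solution $A$ to the Poisson equation \eqref{poi}.

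First I would invoke Todorova--Yordanov \cite[Proposition 1.3]{ToYo09}, which asserts that whenever $a\in C^2(\R^N)$ is radially symmetric and satisfies the asymptotic condition \eqref{a0}, there exists a radially symmetric solution $A\in C^2(\R^N)$ of $\Delta A = a$ with the explicit leading-order behavior
\[
   A(x) = \frac{a_0}{(2-\alpha)(N-\alpha)}\,|x|^{2-\alpha} + o(|x|^{2-\alpha}) \quad \text{as } |x|\to \infty,
\]
together with $\nabla A(x) = \tfrac{a_0}{N-\alpha}|x|^{-\alpha}\tfrac{x}{|x|} + o(|x|^{1-\alpha})$. This is precisely condition \eqref{a3}.

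Next I would check that the hypotheses \eqref{a1} and \eqref{a2} of Proposition \ref{prop_en} are satisfied, possibly after shifting $A$ by a large constant $c_0$ as in Lemma \ref{lem_a}. The bound \eqref{a1} is immediate from \eqref{a3} combined with the positivity/continuity of $A$ on compact sets of $\overline{\Omega}$. For \eqref{a2}, I would compute
\[
   \frac{|\nabla A(x)|^2}{a(x)A(x)}
   \sim
   \frac{\bigl(\tfrac{a_0}{N-\alpha}\bigr)^2 |x|^{2-2\alpha}}
        {a_0 |x|^{-\alpha}\cdot \tfrac{a_0}{(2-\alpha)(N-\alpha)}|x|^{2-\alpha}}
   = \frac{2-\alpha}{N-\alpha}
\]
as $|x|\to \infty$, which gives the sharp value $h_a = \tfrac{2-\alpha}{N-\alpha}$ in \eqref{a4}.

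Finally I would apply Proposition \ref{prop_en} directly: for any $\ep>0$ it supplies $\beta>0$, $t_0>0$ and $C>0$ such that the weighted estimates \eqref{en_es} hold with decay exponent $-1/h_a - 2k + \ep$ (and $-1/h_a - 2k - 1 + \ep$ for the gradient term). Substituting the just-computed value $1/h_a = (N-\alpha)/(2-\alpha)$ yields exactly the exponents $-\tfrac{N-\alpha}{2-\alpha} - 2k + \ep$ and $-\tfrac{N-\alpha}{2-\alpha} - 2k - 1 + \ep$ stated in the corollary. Since all steps reduce to either quoting Proposition \ref{prop_en} or evaluating an explicit limit from \eqref{a3}, there is no genuine obstacle here; the only care needed is to ensure $A$ itself (rather than $A_0$ from Lemma \ref{lem_a}) is used, which only affects the constant $C$ and the base time $t_0$.
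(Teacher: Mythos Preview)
Your proposal is correct and matches the paper's approach exactly: the paper does not give a separate proof of Corollary \ref{cor32}, but simply notes (in the paragraph preceding it) that when $a$ satisfies \eqref{a0}, Todorova--Yordanov \cite[Proposition 1.3]{ToYo09} yields a solution $A$ of \eqref{poi} with the asymptotics \eqref{a3} and the value $h_a=(2-\alpha)/(N-\alpha)$ in \eqref{a4}, whereupon the corollary is immediate from Proposition \ref{prop_en}. Your explicit computation of the limit $|\nabla A|^2/(aA)\to (2-\alpha)/(N-\alpha)$ is a welcome elaboration of what the paper leaves to the citation.
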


In order to prove Proposition \ref{prop_en},
we use a weighted energy method which originates from
Todorova and Yordanov \cite{ToYo01}, \cite{ToYo09} and Ikehata \cite{Ik05IJPAM}.
Our proof is based on the argument of \cite{Ni10} and a Hardy-type inequality
Lemma \ref{lem_ha}.

\begin{lemma}
\label{fundamental}
We have
\begin{align*}
	\pa_t \Phi_{A,\beta}
	&= - \beta \frac{A}{(1+t)^2}\Phi_{A,\beta}, \\
	\nabla\Phi_{A,\beta}
	&= \beta \frac{\nabla A}{1+t}\Phi_{A,\beta},\\
	\Delta\Phi_{A,\beta}
	&= \beta \frac{\Delta A}{1+t}\Phi_{A,\beta}
		+  \left|\beta\frac{\nabla A}{1+t}\right|^2 \Phi_{A,\beta}.
\end{align*}
In particular, we have
\begin{align*}
	-\Delta\Phi_{A,\beta}+\frac{|\nabla \Phi_{A,\beta}|^2}{\Phi_{A,\beta}}
	&= -\beta \frac{a(x)}{1+t}\Phi_{A,\beta},\\
	\frac{|\nabla \Phi_{A,\beta}|^2}{\pa_t \Phi_{A,\beta}}
	&=-\beta \frac{|\nabla A|^2}{A}\Phi_{A,\beta}.
\end{align*}
\end{lemma}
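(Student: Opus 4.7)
The proof is a direct calculation, so my plan is to carry it out by differentiation and then to repackage the identities. I will set $F(x,t)=\beta A(x)/(1+t)$, so that $\Phi_{A,\beta}=e^{F}$, and then everything reduces to the chain and product rules.

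First, I would compute the three first-order identities. Taking $\partial_t$ of $e^F$ gives $\partial_t F\cdot e^F$, and $\partial_t F=-\beta A/(1+t)^2$, producing the first formula. Similarly $\nabla e^F=\nabla F\cdot e^F$ with $\nabla F=\beta\nabla A/(1+t)$, producing the second. For $\Delta\Phi_{A,\beta}$ I would write
\begin{equation*}
\Delta e^F=\mathrm{div}(e^F\nabla F)=e^F\Delta F+e^F|\nabla F|^2,
\end{equation*}
and inserting $\Delta F=\beta\Delta A/(1+t)$ and $|\nabla F|^2=\beta^2|\nabla A|^2/(1+t)^2$ gives the third. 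None of these uses the Poisson equation yet; they hold for any smooth $A$.

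Next I would derive the two ``in particular'' identities by pure algebra from the three formulas above. For the first one, the $|\nabla F|^2 e^F$ term in $\Delta\Phi_{A,\beta}$ cancels against $|\nabla\Phi_{A,\beta}|^2/\Phi_{A,\beta}=|\nabla F|^2 e^F$, leaving $-\beta\Delta A/(1+t)\cdot\Phi_{A,\beta}$; at this single point I invoke the Poisson equation \eqref{poi}, $\Delta A=a(x)$, to obtain the stated $-\beta a(x)\Phi_{A,\beta}/(1+t)$. For the second identity, I would simply divide the expression for $|\nabla\Phi_{A,\beta}|^2=\beta^2|\nabla A|^2\Phi_{A,\beta}^2/(1+t)^2$ by the expression for $\partial_t\Phi_{A,\beta}=-\beta A\Phi_{A,\beta}/(1+t)^2$; the factors of $(1+t)^2$ and one factor of $\beta$ cancel, yielding $-\beta|\nabla A|^2\Phi_{A,\beta}/A$.

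There is no genuine obstacle here: the only thing to be careful about is bookkeeping of the signs and powers of $(1+t)$, and keeping track that $A$ is only a function of $x$ so that $\partial_t$ hits only the $1/(1+t)$ factor. The Poisson equation $\Delta A=a$ is used exactly once, in the first combined identity, and the bound \eqref{a2} plays no role in the proof of this lemma itself (it will be needed later when the identity is fed into the weighted energy estimate).
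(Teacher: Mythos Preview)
Your proof is correct and follows essentially the same approach as the paper: a direct chain-rule computation of $\partial_t\Phi_{A,\beta}$, $\nabla\Phi_{A,\beta}$, and $\Delta\Phi_{A,\beta}$, followed by algebraic combination (using $\Delta A=a$) to obtain the two derived identities. The only difference is cosmetic---you introduce the shorthand $F=\beta A/(1+t)$ and spell out the cancellations in the ``in particular'' part more explicitly than the paper, which simply writes ``Combining the equalities above, we deduce the assertion.''
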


\begin{proof}
It suffices to observe that
\begin{align*}
\pa_t \Phi_{A,\beta}(x,t)
&=
\exp\left[\beta \frac{A(x)}{1+t}\right]
\left(
-\beta \frac{A(x)}{(1+t)^2}
\right)
=
-\beta \frac{A(x)}{(1+t)^2}
\Phi_{A,\beta}(x,t), 
\\[10pt]
\nabla\Phi_{A,\beta}(x,t)
&=
\exp\left[\beta \frac{A(x)}{1+t}\right]
\left(
\beta \frac{\nabla A}{1+t}
\right)
=
 \beta\frac{\nabla A}{1+t}
\Phi_{A,\beta}(x,t),
\\[10pt]
\Delta\Phi_{A,\beta}(x,t)
&=
\beta \frac{\Delta A}{1+t}\Phi_{A,\beta}(x,t)
+ 
\left|\beta\frac{\nabla A}{1+t}\right|^2
\Phi_{A,\beta}(x,t).
\end{align*}
Combining the equalities above, we deduce the assertion.
\end{proof}

Next, we prove a Hardy-type inequality with the weight function
$\Phi_{A,\beta}$,
which is essential in our weighted energy method.
\begin{lemma}
\label{lem_ha}
We have
\begin{align}
\label{hardy}
	\frac{\beta}{1+t}\int_{\Omega} a(x)|u|^2 \Phi_{A,\beta}\,dx
	\leq  \int_{\Omega} |\nabla u|^2 \Phi_{A,\beta}\,dx.
\end{align}
\end{lemma}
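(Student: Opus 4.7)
The plan is to reduce the desired inequality to an integration by parts against $\Phi_{A,\beta}$ and then close the estimate using the elementary bound $2|ab|\le a^{2}+b^{2}$. The identity from Lemma \ref{fundamental},
\[
-\Delta\Phi_{A,\beta}+\frac{|\nabla\Phi_{A,\beta}|^{2}}{\Phi_{A,\beta}}=-\beta\frac{a(x)}{1+t}\Phi_{A,\beta},
\]
lets me rewrite the left-hand side of \eqref{hardy} as
\[
\frac{\beta}{1+t}\int_{\Omega}a(x)|u|^{2}\Phi_{A,\beta}\,dx
=\int_{\Omega}|u|^{2}\Delta\Phi_{A,\beta}\,dx-\int_{\Omega}|u|^{2}\frac{|\nabla\Phi_{A,\beta}|^{2}}{\Phi_{A,\beta}}\,dx.
\]
This converts the weighted $L^{2}$ norm against $a(x)$ into quantities that only involve derivatives of the weight, which is exactly what is needed to play $\nabla u$ against $\nabla\Phi_{A,\beta}$.

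Next I integrate by parts the term $\int_{\Omega}|u|^{2}\Delta\Phi_{A,\beta}\,dx$. The inequality is stated for $u$ in a class for which this is justified — in the application $u$ vanishes on $\pa\Omega$ (Dirichlet condition) and has compact support in $\overline{\Omega}$ by the finite speed of propagation (Lemma \ref{fp}), so no boundary terms appear. This yields
\[
\int_{\Omega}|u|^{2}\Delta\Phi_{A,\beta}\,dx=-2\int_{\Omega}u\,\nabla u\cdot\nabla\Phi_{A,\beta}\,dx.
\]

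Finally I apply the Cauchy--Schwarz-type inequality in the form
\[
-2\,u\,\nabla u\cdot\nabla\Phi_{A,\beta}
\le 2\,|\nabla u|\sqrt{\Phi_{A,\beta}}\cdot |u|\frac{|\nabla\Phi_{A,\beta}|}{\sqrt{\Phi_{A,\beta}}}
\le |\nabla u|^{2}\Phi_{A,\beta}+|u|^{2}\frac{|\nabla\Phi_{A,\beta}|^{2}}{\Phi_{A,\beta}},
\]
which exactly cancels the remaining $|u|^{2}|\nabla\Phi_{A,\beta}|^{2}/\Phi_{A,\beta}$ term and leaves $\int_{\Omega}|\nabla u|^{2}\Phi_{A,\beta}\,dx$ as the bound. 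I do not expect any genuine obstacle here: the whole argument is essentially Davies' trick for log-derivative Hardy inequalities, and the only subtlety is confirming that the class of admissible $u$ really permits the integration by parts. Since the Poisson equation $\Delta A=a$ is the backbone of the identity in Lemma \ref{fundamental}, no further structural assumption on $a$ beyond what is already available is needed, and the constant $\beta$ enters only multiplicatively through the weight.
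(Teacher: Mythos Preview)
Your proof is correct and is essentially the same as the paper's: the paper begins from the nonnegative quantity $\int_{\Omega}\Phi_{A,\beta}^{-1}|\nabla(\Phi_{A,\beta}u)|^{2}\,dx\ge 0$, expands, integrates by parts, and invokes the identity from Lemma~\ref{fundamental}. Expanding that square is exactly your Young inequality step, so the two arguments differ only in the order of presentation.
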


\begin{proof}
Integration by parts gives
\begin{align*}
0&\leq\int_{\Omega}
   \Phi_{A,\beta}^{-1}
   |\nabla (\Phi_{A,\beta} u)|^2
\,dx
\\
&=
\int_{\Omega}
   \Big(
      \Phi_{A,\beta}|\nabla u|^2
      +2(\nabla\Phi_{A,\beta}\cdot \nabla u)u
      +\frac{|\nabla \Phi_{A,\beta}|^2}{\Phi_{A,\beta}}|u|^2
   \Big)
\,dx
\\
&=
\int_{\Omega}
      \Phi_{A,\beta}|\nabla u|^2
\,dx
+
\int_{\Omega}
   \Big(
      -\Delta \Phi_{A,\beta}
      +\frac{|\nabla \Phi_{A,\beta}|^2}{\Phi_{A,\beta}}
   \Big)|u|^2
\,dx.
\end{align*}
By Lemma \ref{fundamental},
we have
\[
	-\Delta \Phi_{A,\beta} + \frac{|\nabla \Phi_{A,\beta}|^2}{\Phi_{A,\beta}}
	= -\beta \frac{\Delta A(x)}{1+t} \Phi_{A,\beta}
	= -\beta \frac{a(x)}{1+t} \Phi_{A,\beta}.
\]
Thus, we obtain \eqref{hardy}.
\end{proof}


Let us turn to the proof of Proposition \ref{prop_en}.
In what follows, we fix an arbitrary constant
$\ep > 0$
and take a constant
$c_0>0$ so that Lemma \ref{lem_a} holds.

Let us define
\begin{align*}
	E_1(t;u)&= \int_{\Omega} (|\nabla u|^2 + |u_t|^2) \Phi_{A,\beta} dx, \\
	E_2(t;u)&= \int_{\Omega} (2uu_t + |u|^2) \Phi_{A,\beta} dx, \\
	F(t;u) &= \int_{\Omega} \left( a(x) + \frac{A(x)}{(1+t)^2} \right) |u_t|^2 \Phi_{A,\beta} dx.
\end{align*}

\begin{remark}
In order to prove Proposition \ref{prop_en} for
$k\in \mathbb{Z}_{\ge 0}$,
we assume additional regularity on the initial data:
\[
	(u_0,u_1)\in
	[H^{k+2}(\Omega)\cap H^1_0(\Omega)]\times [H^{k+1}(\Omega)\cap H^1_0(\Omega)]
\]
and hence, the solution
$u$
of \eqref{dw} has the regularity
$\cap_{j=0}^{k+2}C^{j}([0,\infty);H^{k+2-j}(\Omega))$,
since the initial data satisfies the compatibility condition of order $k$
(see \cite[Theorem 2]{Ik68}).
After proving the conclusion of Proposition \ref{prop_en} for sufficiently regular initial data,
we apply the density argument and obtain the same estimate
for the initial data belonging to
$[H^{k+1}(\Omega)\cap H^1_0(\Omega)]\times [H^{k}(\Omega)\cap H^1_0(\Omega)]$.
\end{remark}

\begin{lemma}
\label{lem_en0}
Let $\Phi\in C^2(\overline{\Omega}\times [0,\infty))$ 
satisfy $\Phi>0$ and $\pa_t\Phi<0$
and let $u$ be a solution of \eqref{dw}. Then
\begin{align*}
	\frac{d}{dt}\left[ \int_{\Omega} \Big(|\nabla u|^2+|u_t|^2\Big) \Phi\,dx \right]
	&= \int_{\Omega} (\pa_t\Phi)^{-1} \big| \pa_t\Phi\nabla u -u_t\nabla\Phi \big|^2 \,dx\\
	&\quad + \int_{\Omega}
		\Big( -2a(x)\Phi+\pa_t\Phi - (\pa_t\Phi)^{-1}|\nabla\Phi|^2 \Big)|u_t|^2 \,dx.   
\end{align*}
In particular,
if we put
$\Phi=\Phi_{A,\beta}$,
and
$\beta < 2 (h_a+\ep)^{-1}$,
then
there exists
$\gamma_1(\beta) >0$
such that 
\begin{align}
\label{eq_en1}
	&\frac{d}{dt} E_1(t;u)
	\leq  -\gamma_1(\beta) F(t;u).
\end{align}
\end{lemma}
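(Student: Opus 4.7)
The plan is to differentiate the energy $\int_\Omega (|\nabla u|^2+|u_t|^2)\Phi\,dx$ directly under the integral sign, substitute the equation $u_{tt}=\Delta u - a(x)u_t$, and integrate by parts in the cross term $2\int_\Omega \nabla u\cdot\nabla u_t\,\Phi\,dx$. The Dirichlet condition forces $u_t|_{\pa\Omega}=0$, so the boundary contribution vanishes and this term becomes $-2\int u_t\Delta u\,\Phi\,dx - 2\int u_t\nabla u\cdot\nabla\Phi\,dx$. The Laplacian then cancels against $2u_tu_{tt}$ modulo $-2a|u_t|^2$, producing
\[
\frac{d}{dt}\int_\Omega (|\nabla u|^2+|u_t|^2)\Phi\,dx = \int_\Omega (|\nabla u|^2+|u_t|^2)\pa_t\Phi\,dx - 2\int_\Omega u_t\nabla u\cdot\nabla\Phi\,dx - 2\int_\Omega a|u_t|^2\Phi\,dx.
\]
Finite propagation (Lemma \ref{fp}) confines $u(\cdot,t)$ to a ball, so every integrand is compactly supported and these manipulations require no decay assumption on $\Phi$.

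The first display of the lemma then follows by completing the square in $(\nabla u, u_t)$ with weight $\pa_t\Phi$. Since $\pa_t\Phi<0$ pointwise, the algebraic identity
\[
\pa_t\Phi\cdot|\nabla u|^2 - 2u_t\nabla u\cdot\nabla\Phi = (\pa_t\Phi)^{-1}\bigl|\pa_t\Phi\,\nabla u - u_t\nabla\Phi\bigr|^2 - (\pa_t\Phi)^{-1}|u_t|^2|\nabla\Phi|^2
\]
is licit, and substituting it into the preceding equality yields exactly the claimed identity.

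For the inequality \eqref{eq_en1} I specialize to $\Phi=\Phi_{A,\beta}$ and invoke Lemma \ref{fundamental} to replace $\pa_t\Phi_{A,\beta}$ by $-\beta A(1+t)^{-2}\Phi_{A,\beta}$ and $(\pa_t\Phi_{A,\beta})^{-1}|\nabla\Phi_{A,\beta}|^2$ by $-\beta|\nabla A|^2A^{-1}\Phi_{A,\beta}$. The $|u_t|^2$-coefficient in the identity becomes
\[
\Bigl(-2a - \beta\tfrac{A}{(1+t)^2} + \beta\tfrac{|\nabla A|^2}{A}\Bigr)\Phi_{A,\beta}.
\]
Applying Lemma \ref{lem_a} (after enlarging $A_0$ by the additive constant $c_0$ to ensure $|\nabla A|^2/(aA)\le h_a+\ep$ on all of $\Omega$), the rightmost term is dominated by $\beta(h_a+\ep)a\Phi_{A,\beta}$, so this coefficient is at most
\[
-\bigl(2-\beta(h_a+\ep)\bigr)a\Phi_{A,\beta} - \beta\tfrac{A}{(1+t)^2}\Phi_{A,\beta}.
\]
Under the hypothesis $\beta<2/(h_a+\ep)$ the number $\gamma_1(\beta):=\min\{2-\beta(h_a+\ep),\beta\}$ is positive, and since the completed-square integral in the identity is nonpositive, dropping it gives $\frac{d}{dt}E_1(t;u)\le -\gamma_1(\beta)F(t;u)$. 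I expect the only finicky point to be the bookkeeping in choosing $\gamma_1(\beta)$ so that both the $a$- and $A(1+t)^{-2}$-components of $F$ are controlled simultaneously; the rest is routine computation allowed by the compact support of $u(\cdot,t)$.
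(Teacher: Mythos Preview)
Your proposal is correct and follows essentially the same route as the paper: differentiate the weighted energy, use $u_{tt}-\Delta u=-a(x)u_t$, integrate by parts to produce the $-2\int u_t\nabla u\cdot\nabla\Phi$ cross term, and complete the square via the identical algebraic identity. For \eqref{eq_en1} the paper likewise invokes Lemma~\ref{fundamental} and Lemma~\ref{lem_a} to bound the $|u_t|^2$-coefficient; your explicit choice $\gamma_1(\beta)=\min\{2-\beta(h_a+\ep),\beta\}$ is a slight sharpening of the paper, which merely asserts the existence of some $\gamma_1(\beta)>0$.
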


\begin{proof}
We have
\begin{align*}
	&\frac{d}{dt}\left[ \int_{\Omega} \Big(|\nabla u|^2+|u_t|^2\Big) \Phi\,dx \right]\\
	&= \int_{\Omega} |\nabla u|^2 (\pa_t\Phi)\,dx  
	+ 2\int_{\Omega} (\nabla u\cdot \nabla u_t) \Phi\,dx  
	+2\int_{\Omega} u_{tt}u_t \Phi\,dx  
	+\int_{\Omega} |u_t|^2 (\pa_t\Phi)\,dx\\
	&= \int_{\Omega} |\nabla u|^2 (\pa_t\Phi)\,dx  
	+ 2\int_{\Omega} (u_{tt}-\Delta u)u_t \Phi\,dx \\
	&\quad - 2\int_{\Omega} (\nabla u\cdot \nabla \Phi)u_t\,dx
	+\int_{\Omega} |u_t|^2 (\pa_t\Phi)\,dx. 
\end{align*}
Since $u$ satisfies \eqref{dw}, we see that
$u_{tt}-\Delta u = -a(x)u_t$.
Moreover, we have
\[
	(\partial_t\Phi) |\nabla u|^2 -2(\nabla u\cdot \nabla \Phi)u_t
	= (\partial_t\Phi)^{-1}| (\partial_t\Phi)\nabla u - u_t\nabla\Phi |^2
		- (\partial_t \Phi)^{-1} |\nabla \Phi|^2 |u_t|^2.
\]
Thus, we have the first assertion.
In particular, if
$\Phi = \Phi_{A,\beta}$,
then
\[
	-2a(x)\Phi+\pa_t\Phi - (\pa_t\Phi)^{-1}|\nabla\Phi|^2
	\le - \left( 2 a(x) +\beta \frac{A}{(1+t)^2}
		- \beta \frac{|\nabla A|^2}{A} \right) \Phi_{A,\beta}.
\]
By Lemma \ref{lem_a},
$|\nabla A(x)|^2/A(x) \le (h_a+\ep)a(x)$.
This and
$\beta < 2(h_a+\ep)^{-1}$
lead to
$-2a(x)+\beta |\nabla A(x)|^2/A(x) \le -(2-\beta(h_a+\ep))a(x)$.
Therefore, there exists some constant $\gamma_1(\beta)>0$ such that
\begin{align*}
	\int_{\Omega}
		\Big( -2a(x)\Phi_{A,\beta}+\pa_t\Phi_{A,\beta}
			- (\pa_t\Phi_{A,\beta})^{-1}|\nabla\Phi_{A,\beta}|^2 \Big)|u_t|^2 \,dx
	\le -\gamma_1(\beta) F(t;u).
\end{align*}
From this and the first assertion, we reach the desired estimate.
\end{proof}

Furthermore, by multiplying
\eqref{eq_en1} by
$(t_0+t)^{m}$
with arbitrary $m\geq 0$,
we have the following.
\begin{lemma}
\label{lem_en1}
Let $u$ be a solution of \eqref{dw}
and let $m\geq 0$.
Then there exists $t_1\geq 1$ such that 
for every $t_0\geq t_1$ and $t>0$,
\begin{align*}
	\frac{d}{dt} \left[ (t_0+t)^{m} E_1(t;u) \right]
	\leq 
	m (t_0+t)^{m-1} \int_{\Omega} |\nabla u|^2 \Phi_{A,\beta}\,dx
	- \frac{\gamma_1(\beta)}{2} (t_0+t)^m F(t;u).
\end{align*}
\end{lemma}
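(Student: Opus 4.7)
The plan is to combine Lemma \ref{lem_en0} with the product rule and then absorb the spurious $|u_t|^2$-contribution coming from $E_1(t;u)$ into half of the dissipation term $\gamma_1(\beta)F(t;u)$ produced by Lemma \ref{lem_en0}. The extra room needed for this absorption is created by choosing $t_0$ sufficiently large, which is the only place where $t_1$ enters.

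Concretely, I would first apply the product rule to obtain
\[
\frac{d}{dt}\big[(t_0+t)^m E_1(t;u)\big]
= m(t_0+t)^{m-1} E_1(t;u) + (t_0+t)^m \frac{d}{dt} E_1(t;u),
\]
and then invoke Lemma \ref{lem_en0} to replace $\frac{d}{dt} E_1(t;u)$ by $-\gamma_1(\beta) F(t;u)$. Splitting $E_1(t;u) = \int_\Omega |\nabla u|^2 \Phi_{A,\beta}\,dx + \int_\Omega |u_t|^2 \Phi_{A,\beta}\,dx$, the $|\nabla u|^2$-part is already precisely the first term on the right-hand side of the claim, so the entire question reduces to the absorption bound
\[
m(t_0+t)^{m-1}\int_\Omega |u_t|^2\Phi_{A,\beta}\,dx \leq \frac{\gamma_1(\beta)}{2}(t_0+t)^m F(t;u)
\]
for every $t>0$, provided $t_0$ is large enough.

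To produce this absorption I would exploit the finite speed of propagation (Lemma \ref{fp}) together with the lower bound $a(x)\geq a_1(1+|x|)^{-\alpha}$ from \eqref{a0p}. Since any $x\in \mathrm{supp}\,u(\cdot,t)$ satisfies $|x|\leq R_0+t$, choosing $t_0\geq R_0$ gives
\[
a(x) \geq a_1(1+R_0+t)^{-\alpha} \geq c\,(t_0+t)^{-\alpha}
\]
on the support of $u(\cdot,t)$, with a constant $c>0$ independent of $t$. Since $F(t;u)\geq \int_\Omega a(x)|u_t|^2\Phi_{A,\beta}\,dx$, it follows that $\int_\Omega |u_t|^2\Phi_{A,\beta}\,dx \leq c^{-1}(t_0+t)^{\alpha}F(t;u)$, and the required absorption reduces to $m c^{-1}(t_0+t)^{\alpha-1} \leq \gamma_1(\beta)/2$.

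The only delicate point is uniformity in $t$, and this is precisely where the assumption $\alpha<1$ is essential: because $\alpha-1<0$, the quantity $(t_0+t)^{\alpha-1}$ is monotonically decreasing in $t_0+t$, so a single threshold $t_1 \geq \max(R_0,1)$ depending on $m,\alpha,a_1,R_0,\gamma_1(\beta)$ makes the inequality hold for all $t\geq 0$ simultaneously. If $\alpha$ were equal to or greater than $1$, this step would fail and no value of $t_0$ would work uniformly. Everything else is product rule, linearity of the integrals, and an invocation of Lemma \ref{lem_en0}.
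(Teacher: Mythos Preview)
Your proposal is correct and follows essentially the same route as the paper: product rule, invoke Lemma~\ref{lem_en0}, split $E_1$ into its $|\nabla u|^2$ and $|u_t|^2$ parts, and absorb the $|u_t|^2$ contribution into half of $\gamma_1(\beta)F(t;u)$ using finite propagation speed together with $a(x)\ge a_1(1+|x|)^{-\alpha}$ and $\alpha<1$. The paper's proof is terser but the argument and the choice of $t_1$ are the same.
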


\begin{proof}
Let $t_1$ be a constant determined later and let $t_0 \ge t_1$.
Multiplying \eqref{eq_en1} by
$(t_0+t)^{m}$,
we have
\begin{align*}
	&\frac{d}{dt}
	\left[ (t_0+t)^{m} E_1(t;u)  \right] \\
	&\leq m (t_0+t)^{m-1} E_1(t;u) -\gamma_1(\beta)(t_0+t)^{m}F(t;u) \\
	&\leq  m (t_0+t)^{m-1} \int_{\Omega} |\nabla u|^2 \Phi_{A,\beta}\,dx \\
	&\quad + (t_0+t)^m
		\left[ m (t_0+t)^{-1} \int_{\Omega} |u_t|^2\Phi_{A,\beta}\,dx -\gamma_1(\beta) F(t;u) \right].
\end{align*}
By Lemma \ref{fp}, we have
$a(x) \ge a_1 (1+|x|)^{-\alpha} \ge a_1 t_1^{-\alpha}$
if $t_1 \ge R_0+1$.
Since $\alpha < 1$, retaking $t_1$ larger so that
\[
	m (t_0 + t)^{-1}
	\le a_1^{-1} m (t_0 + t)^{\alpha-1} a(x)
	\le \frac{\gamma_1(\beta)}{2} a(x),
\]
we have the desired estimate.
\end{proof}

\begin{lemma}
Let $\Phi\in C^2(\overline{\Omega}\times [0,\infty))$ 
satisfy $\Phi>0$ and $\pa_t\Phi<0$
and let $u$ be a solution to \eqref{dw}.
Then, we have
\begin{align*}
	&\frac{d}{dt} \left[ \int_{\Omega} \Big(2uu_t+a(x)|u|^2\Big) \Phi\,dx  \right] \\
	&= 2\int_{\Omega} uu_{t} (\pa_t\Phi)\,dx
	+ 2\int_{\Omega} |u_t|^2 \Phi\,dx  
	- 2\int_{\Omega} |\nabla u|^2 \Phi\,dx
	+ \int_{\Omega} \big(a(x)\pa_t\Phi+\Delta\Phi\big)|u|^2\,dx.
\end{align*}
In particular, if
$\Phi = \Phi_{A,\beta}$
and
$0<\beta< (h_a+\ep)^{-1}$,
then there exists $\gamma_2(\beta) > 0$ such that
\begin{align}
\label{eq_en2}
	&\frac{d}{dt} E_2(t;u)+
	\int_{\Omega} |\nabla u|^2 \Phi_{A,\beta}\,dx
	\leq \gamma_2(\beta) (R_0 + 1+ t)^{\alpha}F(t;u).
\end{align}
\end{lemma}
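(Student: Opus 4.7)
The plan is to establish the general identity first and then specialize. For the identity, I would differentiate the integrand directly, substitute $u_{tt}=\Delta u-a(x)u_t$ from \eqref{dw}, and note that the two $\pm 2a(x)uu_t\Phi$ contributions cancel. The remaining $2\int u\Delta u\,\Phi\,dx$ I would integrate by parts twice: the first application gives $-2\int|\nabla u|^{2}\Phi\,dx-2\int u(\nabla u\cdot\nabla\Phi)\,dx$, and rewriting $2u\nabla u=\nabla(u^{2})$ before a second integration by parts turns the cross term into $\int(\Delta\Phi)u^{2}\,dx$ with no boundary contribution since $u|_{\pa\Omega}=0$. Collecting the remaining terms containing $\pa_t\Phi$ yields the asserted formula.

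For the specialization with $\Phi=\Phi_{A,\beta}$, I would invoke Lemma \ref{fundamental} to replace $\pa_t\Phi_{A,\beta}$ and $\Delta\Phi_{A,\beta}$ by their explicit expressions. The coefficient $a(x)\pa_t\Phi_{A,\beta}+\Delta\Phi_{A,\beta}$ then equals
\[
\bigg(\frac{\beta a(x)}{1+t}+\frac{\beta^{2}|\nabla A|^{2}-\beta a(x)A(x)}{(1+t)^{2}}\bigg)\Phi_{A,\beta}.
\]
Tested against $u^{2}$, the first summand is absorbed into $\int|\nabla u|^{2}\Phi_{A,\beta}\,dx$ via the Hardy-type inequality Lemma \ref{lem_ha}. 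For the second summand, the pointwise bound $|\nabla A|^{2}\leq (h_a+\ep)aA$ from Lemma \ref{lem_a}, combined with $\beta(h_a+\ep)<1$, majorizes it by $-\beta(1-\beta(h_a+\ep))\frac{aA}{(1+t)^{2}}\Phi_{A,\beta}$, a negative term that will serve to absorb other contributions.

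It then remains to handle $2\int u_t^{2}\Phi_{A,\beta}\,dx$ and the cross contribution $2\int uu_t\,\pa_t\Phi_{A,\beta}\,dx$. For the latter, Young's inequality splits the integrand into a multiple of $\frac{aA}{(1+t)^{2}}u^{2}\Phi_{A,\beta}$, absorbed by the negative coefficient above, and a multiple of $\frac{A}{a(1+t)^{2}}u_t^{2}\Phi_{A,\beta}$. Finite speed of propagation (Lemma \ref{fp}) together with \eqref{a0p} gives $a(x)^{-1}\leq C(R_0+1+t)^{\alpha}$ on ${\rm supp}\,u(\cdot,t)$, so both $\int u_t^{2}\Phi_{A,\beta}\,dx$ and this Young remainder are bounded by $C(R_0+1+t)^{\alpha}F(t;u)$. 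The delicate point, and the main technical hurdle, is to choose the Young weight so that the $u^{2}$ share has strictly smaller coefficient than the negative term produced from $\Delta\Phi_{A,\beta}+a(x)\pa_t\Phi_{A,\beta}$; this is exactly the role played by the strict inequality $\beta<(h_a+\ep)^{-1}$ (sharper than the cruder $\beta<2(h_a+\ep)^{-1}$ used in Lemma \ref{lem_en0}).
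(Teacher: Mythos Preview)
Your proposal is correct and follows essentially the same route as the paper: the general identity via differentiation, the cancellation of the $\pm 2a(x)uu_t\Phi$ terms, two integrations by parts on $2\int u\Delta u\,\Phi\,dx$, and then the specialization using Lemma~\ref{fundamental}, the Hardy-type Lemma~\ref{lem_ha} to absorb $\frac{\beta a}{1+t}|u|^2\Phi_{A,\beta}$, Lemma~\ref{lem_a} together with $\beta(h_a+\ep)<1$ to produce the negative $aA|u|^2$ term, Young's inequality on the cross term with the weight chosen so that the $u^2$ share is absorbed, and finite propagation speed to control $a(x)^{-1}$. Your reading of why the strict inequality $\beta<(h_a+\ep)^{-1}$ is needed here (versus the weaker condition in Lemma~\ref{lem_en0}) matches the paper's use exactly.
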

\begin{proof}
Since $u$ satisfies \eqref{dw}, we see that
\begin{align*}
	&\frac{d}{dt} \left[
	\int_{\Omega} \Big(2uu_t+a(x)|u|^2\Big) \Phi\,dx  \right] \\
	&= 2\int_{\Omega} uu_{t} (\pa_t\Phi)\,dx
	+ \int_{\Omega} a(x)|u|^2 (\pa_t\Phi)\,dx \\
	&\quad
	+ 2\int_{\Omega} |u_t|^2 \Phi\,dx  
	+ 2\int_{\Omega} u(u_{tt}+a(x)u_t) \Phi\,dx \\
	&= 2\int_{\Omega} uu_{t} (\pa_t\Phi)\,dx   
	+ \int_{\Omega} a(x)|u|^2 (\pa_t\Phi)\,dx
	+ 2\int_{\Omega} |u_t|^2 \Phi\,dx  
	+ 2\int_{\Omega} u(\Delta u) \Phi\,dx.
\end{align*}
Noting that integration by parts yields 
\begin{align*}
	\int_{\Omega} u(\Delta u) \Phi\,dx
	&= -\int_{\Omega} |\nabla u|^2 \Phi\,dx
	- \int_{\Omega} u(\nabla u\cdot\nabla\Phi)\,dx \\
	&= -\int_{\Omega} |\nabla u|^2 \Phi\,dx
	- \frac{1}{2}\int_{\Omega} \big(\nabla(|u|^2)\cdot\nabla\Phi\big)\,dx \\
	& = -\int_{\Omega} |\nabla u|^2 \Phi\,dx
	+\frac{1}{2} \int_{\Omega} (\Delta\Phi)|u|^2\,dx,
\end{align*}
we deduce
\begin{align*}
&\frac{d}{dt}
\left[
\int_{\Omega}
   \Big(2uu_t+a(x)|u|^2\Big)
\Phi\,dx 
\right]
+
2\int_{\Omega}
   |\nabla u|^2
\Phi\,dx 
\\
&=
2\int_{\Omega}
   uu_{t}
(\pa_t\Phi)\,dx
+
2\int_{\Omega}
   |u_t|^2
\Phi\,dx   
+
\int_{\Omega}
   \big(a(x)\pa_t\Phi+\Delta\Phi\big)|u|^2\,dx.
\end{align*}
This proves the first assertion.
If $\Phi=\Phi_{A,\beta}$, then from Lemma \ref{fundamental} we obtain
\[
	a(x)\pa_t\Phi_{A,\beta}+\Delta \Phi_{A,\beta}
	= - \beta \frac{a(x)A(x)}{(1+t)^2} \Phi_{A,\beta}
		+ \beta \frac{a(x) }{1+t} \Phi_{A,\beta}
		+ \left| \beta \frac{\nabla A(x)}{1+t} \right|^2 \Phi_{A,\beta}.
\]
By Lemma \ref{lem_a}, we have
$|\nabla A(x)|^2 \le (h_a+\ep)a(x)A(x)$
and hence,
\[
	a(x)\pa_t\Phi_{A,\beta}+\Delta \Phi_{A,\beta}
      \leq
      \frac{\beta}{1+t}a(x)\Phi_{A,\beta}
	- \frac{\beta(1-\beta(h_a+\ep))}{(1+t)^2}a(x)A(x)\Phi_{A,\beta}.
\]
From this and Lemma \ref{lem_ha}, we conclude
\begin{align*}
	&\int_{\Omega} \big(a(x)\pa_t\Phi_{A,\beta}+\Delta\Phi_{A,\beta} \big)|u|^2\,dx\\
	&\quad \le \int_{\Omega} |\nabla u|^2 \Phi_{A,\beta}\,dx
		- \frac{\beta(1-\beta(h_a+\ep))}{(1+t)^2}
			\int_{\Omega} a(x)A(x) |u|^2 \Phi_{A,\beta}\,dx.
\end{align*}
On the other hand, the Young inequality and Lemma \ref{fp} yield
\begin{align*}
2\int_{\Omega}
   uu_{t}
(\pa_t\Phi_{A,\beta})\,dx
&=
\frac{2\beta}{(1+t)^2}\int_{\Omega}
   uu_{t}
A\Phi_{A,\beta}\,dx
\\
&\leq 
\frac{4\beta}{(1-\beta(h_a+\ep))(1+t)^2}\int_{\Omega}
a(x)^{-1}A(x)|u_{t}|^2\Phi_{A,\beta}\,dx
\\
&\quad+
\frac{\beta(1-\beta(h_a+\ep))}{(1+t)^2}\int_{\Omega}
a(x)A(x)|u|^2\Phi_{A,\beta}\,dx
\\
&\leq 
\frac{4\beta(R_0+1+t)^{\alpha}}{c_0(1-\beta(h_a+\ep))(1+t)^2}\int_{\Omega}
A(x)|u_{t}|^2\Phi_{A,\beta}\,dx
\\
&\quad+
\frac{\beta(1-\beta(h_a+\ep))}{(1+t)^2}\int_{\Omega}
a(x)A(x)|u|^2\Phi_{A,\beta}\,dx.
\end{align*}
Therefore, we have
\begin{align*}
	&\frac{d}{dt}E_2(t;u)
	+ \int_{\Omega} |\nabla u|^2 \Phi_{A,\beta}\,dx \\
	&\leq  2\int_{\Omega} |u_t|^2 \Phi_{A,\beta}\,dx
	+ \frac{4\beta(R_0+1+t)^{\alpha}}{(1-\beta(h_a+\ep))(1+t)^2}
	\int_{\Omega} A(x) |u_t|^2 \Phi_{A,\beta}\,dx.
\end{align*}
Finally, Lemma \ref{lem_a} gives
\[
	2\int_{\Omega} |u_t|^2 \Phi_{A,\beta}\,dx
	\le 2a_1^{-1} (R_0+1+t)^{\alpha} \int_{\Omega} a(x) |u_t|^2 \Phi_{A,\beta}\, dx
\]
and hence,
\[
	\frac{d}{dt}E_2(t;u)
	+ \int_{\Omega} |\nabla u|^2 \Phi_{A,\beta}\,dx \\
	\leq \gamma_2(\beta) (R_0+1+t)^{\alpha} F(t;u)
\]
with some $\gamma_2(\beta) >0$,
which completes the proof.
\end{proof}

As in Lemma \ref{lem_en1} by multiplying
\eqref{eq_en2} by
$(t_0+t)^{l}$
with arbitrary $l\geq 0$,
we have the following.

\begin{lemma}
\label{lem_en2}
Let
$l\geq 0$.
There exists $t_2\geq t_1$ such that for every
$t_0 \ge t_2$
and
$t\geq 0$, 
\begin{align*}
	&\frac{d}{dt} \left[ (t_0+t)^{l} E_2(t;u) \right]
		+ (t_0+t)^l \int_{\Omega} |\nabla u|^2 \Phi_{A,\beta}\,dx \\
	&\quad \leq
		l (1+\beta \ep ) (t_0+t)^{l-1} \int_{\Omega} a(x) |u|^2 \Phi_{A,\beta}\,dx
	+ 2\gamma_2(\beta)(t_0+t)^{l+\alpha}F(t;u).
\end{align*}
\end{lemma}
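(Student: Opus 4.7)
The strategy is to premultiply the differential inequality \eqref{eq_en2} by $(t_0+t)^l$ and regroup via the product rule. Writing
\[
\frac{d}{dt}\bigl[(t_0+t)^l E_2(t;u)\bigr] = (t_0+t)^l \frac{d}{dt}E_2(t;u) + l(t_0+t)^{l-1}E_2(t;u),
\]
we obtain immediately
\begin{align*}
\frac{d}{dt}\bigl[(t_0+t)^l E_2(t;u)\bigr] &+ (t_0+t)^l\int_\Omega |\nabla u|^2\Phi_{A,\beta}\,dx \\
&\le l(t_0+t)^{l-1} E_2(t;u) + \gamma_2(\beta)(t_0+t)^l(R_0+1+t)^\alpha F(t;u).
\end{align*}
What remains is to dominate the two right-hand side terms by the two claimed in the statement.

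To handle $E_2(t;u)$, I would apply Young's inequality to the cross-term with parameter $\beta\ep$:
\[
2uu_t \le \beta\ep\, a(x)|u|^2 + (\beta\ep)^{-1} a(x)^{-1}|u_t|^2,
\]
yielding
\[
E_2(t;u) \le (1+\beta\ep)\int_\Omega a(x)|u|^2\Phi_{A,\beta}\,dx + (\beta\ep)^{-1}\int_\Omega a(x)^{-1}|u_t|^2\Phi_{A,\beta}\,dx.
\]
The first term matches (up to the prefactor $l(t_0+t)^{l-1}$) the first term of the target. For the second, the finite-speed-of-propagation property (Lemma \ref{fp}) combined with \eqref{a0p} gives $a(x)^{-1}\le a_1^{-1}(R_0+1+t)^\alpha$ and $|u_t|^2 \le a_1^{-1}(R_0+1+t)^\alpha a(x)|u_t|^2$ on $\mathrm{supp}\,u(\cdot,t)$, so this piece is bounded by $C_\alpha(R_0+1+t)^{2\alpha}F(t;u)$.

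Combining, the gap between what we have and the target is
\[
\bigl[\gamma_2(\beta)(R_0+1+t)^\alpha(t_0+t)^l - 2\gamma_2(\beta)(t_0+t)^{l+\alpha}\bigr]F(t;u) + C l(\beta\ep)^{-1}(t_0+t)^{l-1}(R_0+1+t)^{2\alpha}F(t;u),
\]
and both pieces become nonpositive once $t_0$ is sufficiently large. Indeed, for $t_0\ge R_0+1$ one has $(R_0+1+t)^\alpha\le(t_0+t)^\alpha$, which kills the first bracket, while the residual term behaves like $(t_0+t)^{l+\alpha}\cdot(t_0+t)^{\alpha-1}$, which is absorbed into $\gamma_2(\beta)(t_0+t)^{l+\alpha}F(t;u)$ for $t_0$ large because $\alpha<1$. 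The main technical point — and the one place hypothesis $\alpha<1$ is essential — is precisely this absorption of the Young residue; it is what forces us to enlarge $t_1$ to a new threshold $t_2\ge t_1$ depending on $l$, $\beta$, $\ep$, $a_1$, and $R_0$.
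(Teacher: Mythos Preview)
Your proof is correct and follows essentially the same route as the paper: multiply \eqref{eq_en2} by $(t_0+t)^l$, split the cross term $2uu_t$ via Young's inequality with weight $a(x)$, convert the $a^{-1}|u_t|^2$ residual into $F(t;u)$ using finite speed of propagation and \eqref{a0p}, and absorb the leftover by enlarging $t_0$ thanks to $\alpha<1$. The only cosmetic difference is that the paper chooses a time-dependent Young parameter so that the absorption happens on the $a|u|^2$ side rather than on the $F$ side, but the mechanism is identical.
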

\begin{proof}
Let $l \ge 0$ and let $t_2 \ge t_1$ be a constant determined later.
We assume that
$t_0 \ge t_2$.
Multiplying \eqref{eq_en2} by
$(t_0+t)^{l}$,
we have
\begin{align*}
	&\frac{d}{dt}
	\left[
	(t_0+t)^{l}
	E_2(t;u)
	\right]
	+
	(t_0+t)^{l}
	\int_{\Omega}
	|\nabla u|^2
	\Phi_{A,\beta}\,dx
	\\
	&\quad \leq 
	(t_0+t)^{l}
	\left[
	l (t_0+t)^{-1}
	E_2(t;u)
	+
	\gamma_2(\beta) (R_0+1+t)^{\alpha} F(t; u)
	\right].
\end{align*}
We estimate the right-hand side.
From the Schwarz inequality and 
Lemma \ref{fp}, we obtain
\begin{align*}
	&2l (t_0+t)^{-1}
	\int_{\Omega} uu_t\Phi_{A,\beta}\,dx \\
	&\leq 
		\frac{l^2}{\gamma_2(\beta)} (t_0+t)^{-2+\alpha}
	\int_{\Omega} a(x)|u|^2 \Phi_{A,\beta}\,dx
	+ \gamma_2(\beta) (t_0+t)^{-\alpha} \int_{\Omega} a(x)^{-1}|u_t|^2 \Phi_{A,\beta}\,dx \\
	&\leq
		\frac{l^2}{\gamma_2(\beta)} (t_0+t)^{-2+\alpha}
	\int_{\Omega} a(x)|u|^2 \Phi_{A,\beta}\,dx
	+ \gamma_2(\beta) (t_0+t)^{\alpha} \int_{\Omega} a(x) |u_t|^2 \Phi_{A,\beta}\,dx \\
	&\leq
	\frac{l^2}{\gamma_2(\beta)}(t_0+t)^{-2+\alpha}
	\int_{\Omega} a(x)|u|^2 \Phi_{A,\beta}\,dx
	+ \gamma_2(\beta) (t_0+t)^{\alpha} F(t;u).
\end{align*}
Here we have also used Lemma \ref{fp}
and the definition of
$F(t; u)$.
Taking $t_2 \ge t_1$ such that 
\[
	\frac{l^2}{\gamma_2(\beta)} t_2^{-1+\alpha}
	\leq l \beta \ep 
\]
and hence,
\begin{align*}
	\frac{l^2}{\gamma_2(\beta)}(t_0+t)^{-2+\alpha}
	\int_{\Omega} a(x) |u|^2 \Phi_{A,\beta}\,dx
	&\le
	l \beta \ep (t_0+t)^{-1} \int_{\Omega} a(x) |u|^2 \Phi_{A,\beta}\,dx.
\end{align*}
Consequently, we reach
\begin{align*}
	&\frac{d}{dt} \left[ (t_0+t)^{l} E_2(t;u) \right]
		+ (t_0+t)^l \int_{\Omega} |\nabla u|^2 \Phi_{A,\beta}\,dx \\
	&\quad \leq
		l (1+\beta \ep ) (t_0+t)^{l-1} \int_{\Omega} a(x) |u|^2 \Phi_{A,\beta}\,dx
	+ 2\gamma_2(\beta)(t_0+t)^{l+\alpha}F(t;u).
\end{align*}
This completes the proof.
\end{proof}

In particular,
by choosing
$l = (h_a+4\ep)^{-1}$
and
$\beta=(h_a+2\ep)^{-1}$,
we have the following estimate.
\begin{lemma}
\label{lem_en3}
For every $t_0\ge t_2$ and $t\geq 0$, we have
\begin{align*}
	&\frac{d}{dt} \left[ (t_0+t)^{\frac{1}{h_a+4\ep}} E_2(t;u) \right] \\
	&\leq - \frac{\ep}{h_a+4\ep}(t_0+t)^{\frac{1}{h_a+4\ep}}
	\int_{\Omega} |\nabla u|^2 \Phi_{A,\beta}\,dx
	+2\gamma_2(\beta)(t_0+t)^{\frac{1}{h_a+4\ep}}F(t;u).
\end{align*}
\end{lemma}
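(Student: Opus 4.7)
The plan is to specialize Lemma \ref{lem_en2} to the choices $l = (h_a+4\ep)^{-1}$ and $\beta = (h_a+2\ep)^{-1}$ (the latter satisfies $\beta < (h_a+\ep)^{-1}$, so Lemma \ref{lem_en2} is applicable), and then to use the Hardy-type inequality of Lemma \ref{lem_ha} to absorb the term
\[
l(1+\beta\ep)(t_0+t)^{l-1}\int_{\Omega}a(x)|u|^2\Phi_{A,\beta}\,dx
\]
appearing on the right-hand side of Lemma \ref{lem_en2} into the good gradient term
\[
(t_0+t)^l\int_{\Omega}|\nabla u|^2\Phi_{A,\beta}\,dx
\]
already present on the left-hand side. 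The residual gradient contribution will then give the negative term in the conclusion, while the $F(t;u)$ term is kept (to be reabsorbed later by combining with \eqref{eq_en1} of Lemma \ref{lem_en0} through Lemma \ref{lem_en1}).

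First, I would invoke Lemma \ref{lem_ha}, which gives
\[
\int_{\Omega}a(x)|u|^2\Phi_{A,\beta}\,dx
\leq \frac{1+t}{\beta}\int_{\Omega}|\nabla u|^2\Phi_{A,\beta}\,dx.
\]
Using $1+t\leq t_0+t$ (valid since $t_0\geq t_2\geq 1$), one gets
\[
l(1+\beta\ep)(t_0+t)^{l-1}\int_{\Omega}a(x)|u|^2\Phi_{A,\beta}\,dx
\leq \frac{l(1+\beta\ep)}{\beta}(t_0+t)^{l}\int_{\Omega}|\nabla u|^2\Phi_{A,\beta}\,dx.
\]
The whole point of the specific choice of $l$ and $\beta$ is the identity
\[
\frac{l(1+\beta\ep)}{\beta}
= \frac{h_a+2\ep}{h_a+4\ep}\cdot\frac{h_a+3\ep}{h_a+2\ep}
= \frac{h_a+3\ep}{h_a+4\ep}
= 1-\frac{\ep}{h_a+4\ep},
\]
so subtracting from the gradient term on the left leaves exactly the desired coefficient $-\ep/(h_a+4\ep)$ in front of $(t_0+t)^l\int_{\Omega}|\nabla u|^2\Phi_{A,\beta}\,dx$.

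The main obstacle here is not analytic but algebraic bookkeeping: the two free parameters $l$ and $\beta$ have to be tuned so that the factor $l(1+\beta\ep)/\beta$ is strictly less than $1$ (to leave a genuinely dissipative gradient term), and the allowed margin shrinks to zero as $\ep\to 0$. The separation $\beta=(h_a+2\ep)^{-1}$ versus $l=(h_a+4\ep)^{-1}$ is precisely what creates the extra $\ep/(h_a+4\ep)$ cushion. A secondary technical point is ensuring $t_0$ is large enough that $1+t\leq t_0+t$ can be used cleanly; this is already built into the hypothesis $t_0\geq t_2$.
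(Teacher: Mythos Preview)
Your proposal is correct and follows essentially the same approach as the paper: specialize Lemma~\ref{lem_en2} to $l=(h_a+4\ep)^{-1}$, $\beta=(h_a+2\ep)^{-1}$, then apply the Hardy-type inequality of Lemma~\ref{lem_ha} (together with $1+t\le t_0+t$) to convert the $a(x)|u|^2$ term into a gradient term with coefficient $\frac{h_a+3\ep}{h_a+4\ep}$, leaving the residual $-\frac{\ep}{h_a+4\ep}$. Your algebraic bookkeeping of $l(1+\beta\ep)/\beta$ is exactly what the paper does in one line.
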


\begin{proof}
We choose
$l = (h_a+4\ep)^{-1}$
and
$\beta=(h_a+2\ep)^{-1}$
in Lemma \ref{lem_en2}.
Moreover, Lemma \ref{lem_ha} implies
\[
	l(1+\beta \ep)(t_0+t)^{l-1} \int_{\Omega} a(x) |u|^2 \Phi_{A,\beta}\,dx
	\le \frac{h_a+3\ep}{h_a+4\ep} (t_0+t)^{l} \int_{\Omega} |\nabla u|^2 \Phi_{A,\beta}\, dx,
\]
which gives the desired estimate.
\end{proof}

From Lemmas \ref{lem_en1} and \ref{lem_en3},
we obtain Proposition \ref{prop_en} when
$k=0$.
\begin{proof}[Proof of Proposition \ref{prop_en} when $k=0$]
Let
$t_0 \ge t_2$
be a constant determined later.
Taking
$m=(h_a+4\ep)^{-1}+\alpha$
in Lemma \ref{lem_en1}, we have
\begin{align*}
	\frac{d}{dt}\left[ (t_0+t)^{\frac{1}{h_a+4\ep}+\alpha} E_1(t;u) \right]
	&\le \left( \frac{1}{h_a+4\ep}+\alpha \right) (t_0+t)^{\frac{1}{h_a+4\ep}+\alpha-1}
		\int_{\Omega} |\nabla u|^2 \Phi_{A,\beta}\,dx \\
	&\quad - \frac{\gamma_1(\beta)}{2} (t_0+t)^{\frac{1}{h_a+4\ep}+\alpha} F(t;u).
\end{align*}
Set
$\nu= \min\{ \frac{\gamma_1}{4\gamma_2}, \frac{1}{4a_2}\}$
and define
\[
	\mathcal{E}_0(t;u) = (t_0+t)^{\frac{1}{h_a+4\ep}+\alpha} E_1(t;u)
		+ \nu (t_0+t)^{\frac{1}{h_a+4\ep}} E_2(t;u).
\]
Then, the Schwarz inequality and Lemma \ref{fp} imply
\begin{align*}
	2 \nu |u u_t|
	&\le \frac{1}{2}a(x) |u|^2 + 2\nu a(x)^{-1} |u_t|^2 \\
	&\le \frac{1}{2} a(x) |u|^2 + \frac{\nu a_2}{2} (t_0+t)^{\alpha} |u_t|^2 \\
	&\le \frac{1}{2} a(x) |u|^2 + \frac{1}{2} (t_0+t)^{\alpha} |u_t|^2
\end{align*}
and hence,
$\mathcal{E}(t;u)$ is equivalent to
\[
	(t_0+t)^{\frac{1}{h_a+4\ep}+\alpha}
		\int_{\Omega} \Big(|\nabla u|^2+|u_t|^2\Big) \Phi_{A,\beta}\,dx
	+ (t_0+t)^{\frac{1}{h_a+4\ep}} \int_{\Omega} a(x)|u|^2 \Phi_{A,\beta}\,dx.
\]
By Lemmas \ref{lem_en1} and \ref{lem_en2} with $l=(h_a+4\ep)^{-1}$, we have
\begin{align*}
	&\frac{d}{dt} \mathcal{E}_0(t;u) \\
	&\leq  (t_0+t)^{\frac{1}{h_a+4\ep}}
	\left[ \left(\frac{1}{h_a+4\ep}+\alpha\right) (t_0+t)^{\alpha-1}
	- \frac{\ep \nu}{h_a+4\ep} \right]
	\int_{\Omega} |\nabla u|^2 \Phi_{A_\ep,\beta}\,dx. 
\end{align*}
Therefore, there exists $t_3\geq t_2$ such that 
for every $t_0 \ge t_3$ and $t\geq 0$, 
\begin{align*}
	&\frac{d}{dt} \mathcal{E}_0(t;u)
	\leq  -\frac{\ep \nu}{2(h_a+4\ep)} (t_0+t)^{\frac{1}{h_a+4\ep}}
	\int_{\Omega} |\nabla u|^2 \Phi_{A,\beta}\,dx.
\end{align*}
Integrating it over $[0,t]$, we have
\begin{align}
\label{en3}
	\mathcal{E}_0(t;u)
	+ \frac{\ep \nu}{2(h_a+4\ep)} \int_0^t (t_0+s)^{\frac{1}{h_a+4\ep}}
	\int_{\Omega} |\nabla u|^2 \Phi_{A,\beta}\,dx ds
	\le \mathcal{E}_0(0;u).
\end{align}
In particular, we obtain
\[
	(t_0+t)^{\frac{1}{h_a+4\ep}} \int_{\Omega} a(x) |u|^2 \Phi_{A,\beta}\,dx
	\le C \mathcal{E}_0(0,u)
\]
with some constant
$C>0$,
which depending on
$t_0, \nu, \ep$.
This gives the first assertion of \eqref{en_es} when $k=0$.

Next, we prove the second assertion of \eqref{en_es}.
We note that the estimate \eqref{en3} also gives
\begin{align}
\label{en4}
	\int_0^t (t_0+s)^{\frac{1}{h_a+4\ep}}
	\int_{\Omega} |\nabla u|^2 \Phi_{A,\beta}\,dx ds
	\le C \mathcal{E}_0(0,u).
\end{align}
By choosing
$m= (h_a+4\ep)^{-1}+1$
in Lemma \ref{lem_en1}, we have
\begin{align*}
	&\frac{d}{dt} \left[ (t_0+t)^{\frac{1}{h_a+4\ep}+1} E_1(t;u) \right] \\
	&\le \left( \frac{1}{h_a+4\ep}+1 \right) (t_0+t)^{\frac{1}{h_a+4\ep}}
		\int_{\Omega} |\nabla u|^2 \Phi_{A,\beta}\,dx
	-\frac{\gamma_1(\beta)}{2}(t_0+t)^{\frac{1}{h_a+4\ep}+1} F(t;u).
\end{align*}
Integrating it over $[0,t]$ and using \eqref{en4}, we deduce
\begin{align*}
	&(t_0+t)^{\frac{1}{h_a+4\ep}+1} E_1(t;u)
	+ \int_0^t (t_0+s)^{\frac{1}{h_a+4\ep}+1} F(s;u) ds \\
	&\quad \le C E_1(0;u)+ C\int_0^t (t_0+s)^{\frac{1}{h_a+4\ep}}
		\int_{\Omega} |\nabla u|^2 \Phi_{A,\beta}\,dx ds \\
	&\quad \le C \mathcal{E}_0(0;u).
\end{align*}
In particular, we obtain
\begin{align}
\label{est_E_1F}
	&(t_0+t)^{\frac{1}{h_a+4\ep}+1}
	\int_{\Omega} |\nabla u|^2 \Phi_{A,\beta}\,dx
+
	\int_0^t (t_0+s)^{\frac{1}{h_a+4\ep}+1}F(s,u)\,ds
	\le C \mathcal{E}_0(0;u).
\end{align}
This proves the second assertion of \eqref{en_es} when $k=0$.
\end{proof}

Finally, we give a proof of Proposition \ref{prop_en} when $k\ge 1$.
\begin{proof}[Proof of Proposition \ref{prop_en} for $k\ge 1$.]
For $k \in \mathbb{Z}_{\ge 0}$, we define
\[
	\mathcal{E}_k(t;u)
	= (t_0+t)^{\frac{1}{h_a+4\ep}+\alpha+2k} E_1(t;\partial_t^ku)
		+\nu (t_0+t)^{\frac{1}{h_a+4\ep}+2k} E_2(t;\partial_t^ku).
\]
By differentiating the equation \eqref{dw} with respect to $t$,
we notice that
$u_t$
also satisfies the equation \eqref{dw}.
Therefore,
we apply Lemmas \ref{lem_en1} and \ref{lem_en2}
for
$u_t$
with
$m=(h_a+4\ep)^{-1}+2+\alpha$
and
$l=(h_a+4\ep)^{-1}+2$
to obtain
\begin{align*}
	&\frac{d}{dt}\left[ (t_0+t)^{\frac{1}{h_a+4\ep}+\alpha+2}E_1(t;u_t) \right] \\
	&\quad \le \left( \frac{1}{h_a+4\ep}+\alpha+2 \right)
		(t_0+t)^{\frac{1}{h_a+4\ep}+\alpha+1}
		\int_{\Omega} |\nabla u_t|^2 \Phi_{A,\beta}\,dx \\
	&\qquad -\frac{\gamma_1(\beta)}{2} (t_0+t)^{\frac{1}{h_a+4\ep}+\alpha+2} F(t;u_t)
\end{align*}
and
\begin{align*}
	&\frac{d}{dt} \left[ (t_0+t)^{\frac{1}{h_a+4\ep}+2} E_2(t;u_t) \right]
		+ (t_0+t)^{\frac{1}{h_a+4\ep}+2} \int_{\Omega} |\nabla u_t|^2 \Phi_{A,\beta}\,dx\\
	&\quad \le
		\left( \frac{1}{h_a+4\ep}+2 \right)(1+\beta \ep)
			(t_0+t)^{\frac{1}{h_a+4\ep}+1} \int_{\Omega} a(x) |u_t|^2 \Phi_{A,\beta}\,dx\\
	&\qquad + 2\gamma_2(\beta) (t_0+t)^{\frac{1}{h_a+4\ep}+2+\alpha} F(t;u_t).
\end{align*}
Hence, there exists a constant $t_4 \ge t_3$ such that for every $t_0 \ge t_4$ and $t \ge 0$,
\begin{align*}
	& \frac{d}{dt}\mathcal{E}_1(t;u)
	+ \frac{\nu}{2} (t_0+t)^{\frac{1}{h_a+4\ep}+2}
		\int_{\Omega} |\nabla u_t|^2 \Phi_{A,\beta}\,dx \\
	& \le \left( \frac{1}{h_a+4\ep}+2 \right)(1+\beta\ep)
		(t_0+t)^{\frac{1}{h_a+4\ep}+1} \int_{\Omega} a(x) |u_t|^2 \Phi_{A,\beta}\,dx\\
	& \le \left( \frac{1}{h_a+4\ep}+2 \right)(1+\beta\ep)
		(t_0+t)^{\frac{1}{h_a+4\ep}+1} F(t;u).
\end{align*}
Integrating it over $[0,t]$ and \eqref{est_E_1F}
lead to
\begin{align*}
	&\mathcal{E}_1(t;u)
	+ \int_0^t (t_0+s)^{\frac{1}{h_a+4\ep}+2} \int_{\Omega} |\nabla u_t|^2 \Phi_{A,\beta}\,dx \\
	&\le C\int_0^t (t_0+s)^{\frac{1}{h_a+4\ep}+1} F(s;u) ds\\
	& \le C \left( \mathcal{E}_1(0;u)+\mathcal{E}_0(0;u) \right).
\end{align*}
In particular, we have the first assertion of \eqref{en_es} for $k=1$,
since
$\mathcal{E}_1(0;u)+\mathcal{E}_0(0;u) \le C \| (u_0,u_1)\|_{H^2\times H^1}^2$.
Moreover, we obtain
\[
	\int_0^t (t_0+s)^{\frac{1}{h_a+4\ep}+2} \int_{\Omega} |\nabla u_t|^2 \Phi_{A,\beta}\,dx
	\le C\| (u_0,u_1)\|_{H^2\times H^1}^2.
\]
This estimate together with Lemma \ref{lem_en1} with
$m=(h_a+4\ep)^{-1}+3$
imply
\begin{align*}
	&(t_0+t)^{\frac{1}{h_a+4\ep}+3} E_1(t;u_t)
	+\frac{\gamma_2(\beta)}{2}
	\int_0^t (t_0+s)^{\frac{1}{h_a+4\ep}+3}F(s,u_t)\,ds
	\\
	&\le CE_1(0,u_t)
	+ C\int_0^t (t_0+s)^{\frac{1}{h_a+4\ep}+2} \int_{\Omega} |\nabla u|^2 \Phi_{A,\beta}\,dx
\end{align*}
and hence,
\begin{align*}
	&(t_0+t)^{\frac{1}{h_a+4\ep}+3}\int_{\Omega} |\nabla u_t|^2 dx
+\int_0^t (t_0+s)^{\frac{1}{h_a+4\ep}+3}F(s,u_t)\,ds
\le C\| (u_0,u_1)\|_{H^2\times H^1}^2,
\end{align*}
which gives the second assertion of \eqref{en_es} for $k=1$.
We can repeat the same argument inductively and
we obtain \eqref{en_es} for any $k\in \mathbb{Z}_{\ge 0}$.
\end{proof}


\section{Diffusion phenomena for the damped wave equation}
In this section, we give a proof of Theorem \ref{thm1}.
The argument is similar to that of \cite{Wa14}.
We will apply Proposition \ref{prop_en} with $k=2$ and
assume that the initial data satisfies
\[
	(u_0,u_1) \in [H^3(\Omega)\cap H^1_0(\Omega)]
		\times [H^2(\Omega)\cap H^1_0(\Omega)]
\]
with the compatibility condition of second order
and
${\rm supp}\,(u_0,u_1)\subset \{ x\in \Omega ; |x| \le R_0 \}$
with some
$R_0>0$.
Hence, the corresponding solution $u$ of \eqref{dw} has the regularity
$u\in \cap_{j=0}^3C^j([0,\infty);H^{3-j}(\Omega))$
and
${\rm supp}\, u(\cdot,t) \subset \{ x \in \Omega ; |x| \le R_0 + t \}$.
We rewrite the equation \eqref{dw} as the heat equation on $L^2_{d\mu}$ defined in Section 2:
\[
	u_t - a(x)^{-1}\Delta u = -a(x)^{-1}u_{tt}.
\]
Then we remark that
\begin{lemma}\label{Duhamel}
Assume that the initial data satisfies 
$
	(u_0,u_1) \in [H^3(\Omega)\cap H^1_0(\Omega)]
		\times [H^2(\Omega)\cap H^1_0(\Omega)]
$
with the compatibility condition of second order
and
${\rm supp}\,(u_0,u_1)\subset \{ x\in \Omega ; |x| \le R_0 \}$. 
Then for every $t\geq 0$, 
\[
u(x,t)=(e^{tL_*}u_0)(x)-\int_0^te^{(t-s)L_*}\big[a(\cdot)^{-1}u_{tt}(\cdot,s)\big]\,ds,
\]
where
$L_{*}$
is the Friedrichs extension of
$L=a(x)^{-1}\Delta$ in $L^2_{d\mu}$ given in Lemma \ref{L*}.
\end{lemma}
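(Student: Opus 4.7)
The plan is to recast the damped wave equation as an inhomogeneous linear evolution equation in $L^2_{d\mu}$ with generator $L_*$ and apply the standard Duhamel representation. Rewriting \eqref{dw} gives $u_t = a(x)^{-1}\Delta u - a(x)^{-1}u_{tt}$ pointwise, and once this identity is established in $L^2_{d\mu}$ with $L_* u = a(x)^{-1}\Delta u$, integrating $\frac{d}{ds}\bigl[e^{(t-s)L_*}u(\cdot,s)\bigr]$ from $s=0$ to $s=t$ produces the claimed formula.

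First I would verify that $u(\cdot,t)\in D(L_*)$ with $L_*u(\cdot,t) = a(x)^{-1}\Delta u(\cdot,t)$ for every $t\ge 0$. By the regularity assumption $u(\cdot,t)\in H^2(\Omega)\cap H^1_0(\Omega)$ and $\Delta u(\cdot,t)\in L^2(\Omega)$; by Lemma \ref{fp}, ${\rm supp}\,u(\cdot,t)\subset\{|x|\le R_0+t\}$, on which \eqref{a0-mod} gives $a(x)^{-1/2}\le c_0^{-1/2}(R_0+t)^{\alpha/2}$, so $a^{-1/2}\Delta u(\cdot,t)\in L^2(\Omega)$. Lemma \ref{domain.op} then yields $u(\cdot,t)\in D(L_*)$ and $L_*u(\cdot,t) = a(x)^{-1}\Delta u(\cdot,t)$. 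Applied at $t=0$, the same argument also gives $u_0\in D(L_*)$.

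Next, I would check that the identity $u_t - L_*u = -a(x)^{-1}u_{tt}$ holds as an equality of continuous $L^2_{d\mu}$-valued functions of $t$. The bound $a\le a_2$ from \eqref{a0p} gives the continuous embedding $L^2(\Omega)\hookrightarrow L^2_{d\mu}$, so the regularity $u\in C^1([0,\infty);H^2(\Omega))\cap C^2([0,\infty);H^1(\Omega))$ delivers continuity of both $u_t$ and $u_{tt}$ as $L^2_{d\mu}$-valued maps, while the support estimate together with $a^{-1}\le c_0^{-1}(R_0+t)^{\alpha}$ on ${\rm supp}\,u(\cdot,t)$ controls $\|a(\cdot)^{-1}u_{tt}(\cdot,t)\|_{L^2_{d\mu}}$. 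Fix $t>0$ and set $\phi(s):=e^{(t-s)L_*}u(\cdot,s)$ for $s\in[0,t]$. Since $u(\cdot,s)\in D(L_*)$ for every $s$ and $L_*$ generates a strongly continuous analytic semigroup on $L^2_{d\mu}$, the product rule in $L^2_{d\mu}$ gives
\[
\phi'(s) = e^{(t-s)L_*}\bigl[u_t(\cdot,s)-L_*u(\cdot,s)\bigr] = -e^{(t-s)L_*}\bigl[a(\cdot)^{-1}u_{tt}(\cdot,s)\bigr],
\]
which is continuous in $s$ on $[0,t]$. Integrating on $[0,t]$ and using $\phi(t)=u(\cdot,t)$ and $\phi(0)=e^{tL_*}u_0$ yields the asserted representation.

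The main technical hurdle is ensuring that $a(\cdot)^{-1}u_{tt}(\cdot,s)$ makes sense in $L^2_{d\mu}$ despite the growth $a(x)^{-1}\sim|x|^{\alpha}$ at infinity. The weight $a(x)$ in the measure $d\mu$ reduces the relevant norm to $\int_{\Omega} a^{-1}|u_{tt}|^2\,dx$, which is still divergent in general; only the finite speed of propagation (Lemma \ref{fp}) cuts the integration region down to $\{|x|\le R_0+s\}$ and converts this into the bound $c_0^{-1}(R_0+s)^{\alpha}\|u_{tt}(\cdot,s)\|_{L^2(\Omega)}^2$, which is finite and continuous in $s$. Once this integrability is in hand, everything else reduces to a routine application of semigroup calculus.
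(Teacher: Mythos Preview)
Your proposal is correct and follows essentially the same path as the paper: both use Lemma~\ref{domain.op} together with the finite speed of propagation (Lemma~\ref{fp}) to place $u(\cdot,t)$ in $D(L_*)$ with $L_*u=a^{-1}\Delta u$ and to make sense of $a^{-1}u_{tt}$ in $L^2_{d\mu}$, then identify $u$ with the Duhamel integral. The only cosmetic difference is in the last step: the paper invokes a uniqueness theorem for the abstract Cauchy problem from \cite{Lunardi} (which is why it additionally checks $a^{-1}u_{tt}\in C^1([0,\infty);L^2_{d\mu})$ via $u\in C^3([0,\infty);L^2)$), whereas you carry out the equivalent direct computation of $\frac{d}{ds}\bigl[e^{(t-s)L_*}u(\cdot,s)\bigr]$ and integrate.
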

\begin{proof}
In view of \cite[Theorem 4.3.1]{Lunardi}, it suffices to show that 
\begin{align}
\label{sol}
&u\in C^1([0,\infty);L^2_{d\mu})\cap C([0,\infty);D(L_*)), 
\\
\label{inhomo}
&
f=a^{-1}u_{tt}(t)\in C^1([0,\infty);L^2_{d\mu}).
\end{align}
In fact, if the above condition holds, then 
\begin{equation}\label{abstract}
\begin{cases}
w_t+L_*w=-f, \quad t\in [0,\infty), 
\\
w(0)=u_0\in D(L_*)
\end{cases}
\end{equation}
has a unique classical solution in $C^1([0,\infty);L^2_{d\mu})\cap C([0,\infty);D(L_*))$ 
which is represented by 
\[
w(x,t)=(e^{tL_*}u_0)(x)-\int_0^te^{(t-s)L_*}f(\cdot,s)\,ds, \quad t\geq 0. 
\]
Since $u$ also satisfies \eqref{abstract}, it follows from the uniqueness that $u=w$. 

Now we prove \eqref{sol} and \eqref{inhomo}. 
\eqref{sol} is verified by using Lemmas \ref{fp} and \ref{domain.op} 
with 
$u\in C^1([0,\infty);L^2(\Omega))\cap 
C([0,\infty);H^2\cap H_0^1(\Omega))$. 
Finally, we see from 
the regularity $u\in C^3([0,\infty);L^2(\Omega))$ 
and Lemma \ref{fp} that \eqref{inhomo} is satisfied. This completes the proof.
\end{proof}

Now we are in a position to give the proof of Theorem \ref{thm1}. 
\begin{proof}[Proof of Theorem \ref{thm1}]
By the Leibniz rule
\[
	\frac{\partial}{\partial s} \left(  e^{(t-s)L_{\ast}}[ a(\cdot)^{-1}u_{s}(s) ] \right)
	= -L_{\ast}  e^{(t-s)L_{\ast}}[ a(\cdot)^{-1}u_{s}(s) ]
		+ e^{(t-s)L_{\ast}}[ a(\cdot)^{-1}u_{ss}(s) ]
\]
and the fundamental theorem of calculus in Bochner integral,
we have
\begin{align*}
	\int_0^t e^{(t-s)L_{\ast}}[ a(\cdot)^{-1}u_{ss}(\cdot, s) ] ds
	&= \int_{t/2}^t e^{(t-s)L_{\ast}}[ a(\cdot)^{-1}u_{ss}(\cdot, s) ] ds\\
	&\quad +e^{\frac{t}{2}L_{\ast}} [ a(\cdot)^{-1}u_{t}(\cdot, t/2) ]
		- e^{tL_{\ast}}[a(\cdot)^{-1}u_1] \\
	&\quad +\int_0^{t/2} L_{\ast}  e^{(t-s)L_{\ast}}[ a(\cdot)^{-1}u_{s}(\cdot, s) ] ds.
\end{align*}
Combining Lemma \ref{Duhamel} and the above equality yield
\begin{align}
\label{eq_du2}
	u(x,t) - e^{tL_{\ast}}[ u_0 + a(\cdot)^{-1}u_1]
	 &= - \int_{t/2}^t e^{(t-s)L_{\ast}}[ a(\cdot)^{-1}u_{ss}(\cdot, s) ] ds\\
\nonumber
	&\quad -e^{\frac{t}{2}L_{\ast}} [ a(\cdot)^{-1}u_{t}(\cdot, t/2) ] \\
\nonumber
	&\quad -\int_0^{t/2} L_{\ast}  e^{(t-s)L_{\ast}}[ a(\cdot)^{-1}u_{s}(\cdot, s) ] ds.
\end{align}
We estimate the each term of the right-hand side by using
Propositions \ref{embedding2} and \ref{prop_en}.
Hence, we obtain
\begin{align}
\label{eq_du3}
	\| u(\cdot,t) - e^{tL_{\ast}}[ u_0 + a(\cdot)^{-1}u_1] \|_{L^2_{d\mu}}
	\le J_1 + J_2 + J_3,
\end{align}
where
\begin{align*}
	J_1 &= \int^{t}_{t/2}
		\left\| e^{(t-s)L_{\ast}} [ a(\cdot)^{-1}u_{ss}(\cdot, s) ] \right\|_{L^2_{d\mu}} ds,\\
	J_2 &= \left\| e^{\frac{t}{2}L_{\ast}} [ a(\cdot)^{-1}u_{t}(\cdot, t/2) ] \right\|_{L^2_{d\mu}},\\
	J_3 &= \int_0^{t/2}
		\left\| L_{\ast}  e^{(t-s)L_{\ast}}[ a(\cdot)^{-1}u_{s}(\cdot, s) ]\right\|_{L^2_{d\mu}} ds.
\end{align*}
Let us fix
$\ep>0$
so that
$\ep < \frac{2-2\alpha}{2-\alpha}$.
We first estimate $J_1$.
By
$a(x) \ge a_1 (1+|x|)^{-\alpha}$
with some
$a_1>0$
and
\eqref{a1}, we have
\[
	a(x)^{-2} \le C (1+|x|)^{2\alpha}
	\le \left( \frac{ (1+|x|)^{2-\alpha}}{t} \right)^{2\alpha/(2-\alpha)}
		t^{2\alpha/(2-\alpha)}
	\le t^{2\alpha/(2-\alpha)} \Phi_{A,\beta}(x,t).
\]
We use Lemma \ref{L*} and then, Proposition \ref{prop_en} with $k=2$
to obtain
\begin{align*}
	J_1 &\le
		\int^{t}_{t/2}\left\| a(\cdot)^{-1}u_{ss}(\cdot, s) \right\|_{L^2_{d\mu}} ds \\
	&\le \int_{t/2}^t s^{\alpha/(2-\alpha)}
		\left\| \sqrt{\Phi_{A,\beta}}(\cdot, s) u_{ss}(\cdot,s) \right\|_{L^2_{d\mu}} ds \\
	&\le Ct^{\alpha/(2-\alpha)} \int_{t/2}^t
		(t_0+s)^{-\frac{N-\alpha}{2(2-\alpha)} -2+\ep} \| (u_0,u_1) \|_{H^2\times H^1} ds \\
	&\le C (1+t)^{-\frac{N-\alpha}{2(2-\alpha)} - \frac{2-2\alpha}{2-\alpha}+\ep}
		\| (u_0,u_1) \|_{H^2\times H^1}.
\end{align*}
Next, we estimate $J_2$.
In the same way, we have
\begin{align*}
	J_2&\le \left\| a(\cdot)^{-1} u_t(\cdot, t/2) \right\|_{L^2_{d\mu}}\\
	&\le t^{\alpha/(2-\alpha)}
		\left\| \sqrt{\Phi_{A,\beta}}(\cdot, t) u_{t}(\cdot,t/2) \right\|_{L^2_{d\mu}}\\
	&\le (1+t)^{-\frac{N-\alpha}{2(2-\alpha)} - \frac{2-2\alpha}{2-\alpha}+\ep}
		\| (u_0,u_1) \|_{H^2\times H^1}.
\end{align*}
Finally, we estimate
$J_3$.
By Proposition \ref{embedding2}, we have
\begin{align}
\label{j3_es}
	J_3&\le
	\int_0^{t/2} (t-s)^{-\frac{N-\alpha}{2(2-\alpha)}-1}
		\left\| a(\cdot)^{-1} u_s(\cdot,s) \right\|_{L^1_{d\mu}} ds.
\end{align}
The Schwarz inequality and Proposition \ref{en_es} lead to
\begin{align*}
	&\left\| a(\cdot)^{-1} u_s(\cdot,s) \right\|_{L^1_{d\mu}} \\
	&\quad = \int_{\Omega} |u_s(x,s)| dx\\
	&\quad \le \left( \int_{\Omega} \Phi_{A,\beta}(x,s) a(x) |u_s(x,s)|^2 dx \right)^{1/2}
		\left( \int_{\Omega} \Phi_{A,\beta}(x,s)^{-1} a(x)^{-1} dx \right)^{1/2}\\
	&\quad \le C (t_0+s)^{-\frac{N-\alpha}{2(2-\alpha)}-1+\ep}
		s^{\frac{N+\alpha}{2(2-\alpha)}}
		\| (u_0,u_1) \|_{H^2\times H^1}.
\end{align*}
Here we have used
\begin{align*}
	\int_{\Omega} \Phi_{A,\beta}(x,s)^{-1} a(x)^{-1} dx
	&\le C \int_{\Omega} s^{\frac{\alpha}{2-\alpha}}
	\Phi_{A,\beta}(x,s)^{-1} \left( \frac{(1+|x|)^{2-\alpha}}{s} \right)^{\frac{\alpha}{2-\alpha}} dx\\
	&\le C s^{\frac{N+\alpha}{2-\alpha}}.
\end{align*}
Hence, from \eqref{j3_es} we obtain
\begin{align*}
	J_3
	&\le \int_0^{t/2} (t-s)^{-\frac{N-\alpha}{2(2-\alpha)}-1}
		(t_0 + s)^{-1+\frac{\alpha}{2-\alpha}+\ep} \| (u_0,u_1) \|_{H^2\times H^1}\\
	&\le C (1+t)^{-\frac{N-\alpha}{2(2-\alpha)}-\frac{2-2\alpha}{2-\alpha}+\ep}
		\| (u_0,u_1) \|_{H^2\times H^1}.
\end{align*}
These estimates and \eqref{eq_du3} lead the conclusion
\[
	\| u(\cdot,t) - e^{tL_{\ast}}[ u_0 + a(\cdot)^{-1}u_1] \|_{L^2_{d\mu}}
	\le C (1+t)^{-\frac{N-\alpha}{2(2-\alpha)}-\frac{2-2\alpha}{2-\alpha}+\ep}
		\| (u_0,u_1) \|_{H^2\times H^1}.
\]
This completes the proof.
\end{proof}

\section*{Acknowledgement}
The authors are deeply grateful to Professor Mitsuru Sugimoto
for his careful reading of the manuscript and
helpful comments and suggestions.
This work is supported by
Grant-in-Aid for JSPS Fellows 15J01600 of Japan Society for the Promotion of Science.


\end{document}